\numberwithin{equation}{section}
\newtheorem{theorem}{Theorem}[section]
\newtheorem{lemma}{Lemma}[section]
\newtheorem{proposition}{Proposition}[section]
\theoremstyle{remark}
\newtheorem{remark}{Remark}[section]
\def\B{\mathcal{B}}
\def\R{\mathbb{R}}
\def\Z{\mathbb{Z}}
\def\C{\mathbb{C}}
\def\N{\mathbb{N}}
\def\O{\mathcal{O}}
\def\dist{\operatorname{dist}}
\def\card{\operatorname{card}}
\def\re{\operatorname{Re}}
\def\im{\operatorname{Im}}
\def\diam{\operatorname{diam}}
\def\area{\operatorname{area}}
\def\logarea{\operatorname{logarea}}
\begin{document}
\title{Lebesgue measure of Julia sets and escaping sets of certain entire functions}
\author{Walter Bergweiler}
\date{}
\maketitle
%
%
%
\begin{abstract}
We give criteria for the escaping set and the Julia set of an entire function
to have positive measure. The results are applied to Poincar\'e functions of semihyperbolic 
polynomials and to the Weierstra{\ss} $\sigma$-func\-tion.

\medskip

2010 \emph{Mathematics subject classification}:  37F10, 30D05.

\medskip

\emph{Key words and phrases}: Lebesgue measure, Julia set, escaping set, logarithmic area,
semihyperbolic, Poincar\'e function.
\end{abstract}

\section{Introduction and results} \label{intro}
Let $f$ be a non-linear entire function and let $f^n$ denote the $n$-th iterate of~$f$.
The \emph{Fatou set} $F(f)$ is the set of all $z\in \C$ where the $f^n$ form a normal family;
its complement $J(f)$ is the \emph{Julia set}. The \emph{escaping set} $I(f)$ is the set of all
 $z\in \C$ such that $f^n(z)\to\infty$.
By a result of Eremenko~\cite{Eremenko1989} we have $J(f)=\partial I(f)$.
These sets play a key role in complex dynamics;
see~\cite{Bergweiler1993} and~\cite{Schleicher2010}
for an introduction to the dynamics of transcendental entire functions.

A result of McMullen~\cite[Theorem 1.1]{McMullen1987} says that $J(\sin(\alpha z+\beta))$ has positive
Lebesgue measure for all $\alpha,\beta\in\C$ with $\alpha\neq 0$.
In his proof McMullen actually showed that $I(\sin(\alpha z+\beta))$ has positive measure
and then noted that $I(f)\subset J(f)$ for $f(z)=\sin(\alpha z+\beta)$.
It was later shown by Eremenko and Lyubich~\cite[Theorem~1]{Eremenko1992} that 
$I(f)\subset J(f)$ holds more generally for all transcendental entire functions $f$ for which
the set of critical and asymptotic values is bounded. The class of functions with
the latter property, denoted by~$\B$, is now called the \emph{Eremenko-Lyubich class} and has 
received much attention in transcendental dynamics.

McMullen's result on the measure of $J(\sin(\alpha z+\beta))$ has been extended to various 
classes of functions in~\cite{Aspenberg2012,Bergweiler2016,Sixsmith2015}.
In this paper we give another criterion for the Julia set or escaping set
of an entire function to have positive measure. Perhaps more importantly, we do so by
a method different from those employed in the papers mentioned. Here we only note that
distortion estimates, coming from Koebe's theorem or related results, 
do not occur in the proofs of our main results.

The order $\rho(f)$ of an entire function $f$ is defined by
\begin{equation}\label{4a}
\rho(f)=\limsup_{r\to\infty}\frac{\log\log M(r,f)}{\log r},
\end{equation}
where $M(r,f)=\max_{|z|=r}|f(z)|$ denotes the maximum modulus of~$f$.
The area (i.e., the two-dimensional Lebesgue measure) 
of a measurable subset $A$ of $\C$ is denoted by $\area A$. The \emph{logarithmic area} of $A$ is defined by 
\begin{equation}\label{1e}
\logarea A =\int_A \frac{dx\,dy}{|z|^2} .
\end{equation}
The logarithmic area occurs in transcendental dynamics in~\cite[p.~34]{Bishop2015}
and \cite[p.~575]{Epstein2015}; in the latter paper the term cylindrical area is used.

We are interested in the behavior near $\infty$ and thus 
instead of the logarithmic area of a set $A$ we will usually consider the
logarithmic area of $A\cap\Delta$ where
$\Delta=\{z\colon |z|\geq 1\}$.
\begin{theorem}\label{t1}
Let $f$ be an entire function of finite order. Let $\varepsilon>0$ and suppose that 
\begin{equation}\label{4b}
\logarea\!\left\{ z\in\Delta \colon \left|\frac{zf'(z)}{f(z)}\right| < |z|^{\rho(f)/2+\varepsilon}
\ \text{or} \ 
 \left|f(z)\right| < (1+\varepsilon)|z| \right\} <\infty.
\end{equation}
Then 
\begin{equation}\label{4c}
\logarea(\Delta\backslash I(f))<\infty.
\end{equation}
In particular, $\area I(f)>0$.

If, in addition, $F(f)$ does not have a multiply connected component, then
\begin{equation}\label{4d}
\logarea (\Delta\backslash (I(f)\cap J(f)))<\infty
\end{equation}
and thus $\area (I(f)\cap J(f))> 0$.
\end{theorem}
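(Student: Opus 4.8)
The plan is to show that points where the hypothesis $|zf'(z)/f(z)|\geq |z|^{\rho(f)/2+\varepsilon}$ and $|f(z)|\geq(1+\varepsilon)|z|$ both hold are, with overwhelmingly large probability (in the sense of logarithmic area), escaping points, and that the exceptional set shrinks fast enough under pullback that its logarithmic area remains summable. Write $G$ for the ``good'' set, i.e. the complement in $\Delta$ of the set in \eqref{4b}, so by hypothesis $\logarea(\Delta\setminus G)<\infty$. On $G$ we have $|f(z)|\geq(1+\varepsilon)|z|\geq 1$, so $f(G)\subset\Delta$, and the logarithmic derivative is large. The first key step is a change-of-variables estimate: the logarithmic area is the pullback of planar Lebesgue measure under $z\mapsto\log z$, and the map $f$ expressed in logarithmic coordinates, $w\mapsto \log f(e^w)$, has derivative $zf'(z)/f(z)$. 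Hence for a set $A\subset G$,
\begin{equation}\label{pull}
\logarea f(A) = \int_A \left|\frac{zf'(z)}{f(z)}\right|^2 \frac{dx\,dy}{|z|^2} \geq |z|^{\rho(f)+2\varepsilon}\ \text{-weighted integral},
\end{equation}
so $f$ expands logarithmic area by a factor that is at least $\min_{z\in A}|z|^{\rho(f)+2\varepsilon}$; roughly speaking, $f$ is strongly expanding in the logarithmic metric on $G$. (One must be slightly careful here because $\log f(e^w)$ is multivalued and $f$ need not be injective on $A$, so \eqref{pull} is really an inequality $\int_A|\cdot|^2\,d\mu \geq \logarea f(A)$ coming from the area formula, which is the direction we want.)

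The second step is to iterate. Let $E_n=\{z\in\Delta: z,f(z),\dots,f^{n-1}(z)\in G\ \text{but}\ f^n(z)\notin G\}$ be the points that leave $G$ for the first time at step $n$; then $\Delta\setminus\bigcap_n f^{-n}(G)$ is contained, up to the bounded-logarithmic-area set $\Delta\setminus G$, in $\bigcup_{n\geq 1}\bigcup_{k=0}^{n-1}f^{-k}(E_n\cap\text{stuff})$, and on $G$ one has $|z|\geq(1+\varepsilon)$ forces iterates to grow: in fact on $\bigcap_n f^{-n}(G)$ we get $|f^n(z)|\geq(1+\varepsilon)^n|z|\to\infty$, so $\bigcap_n f^{-n}(G)\subset I(f)$. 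Thus \eqref{4c} reduces to showing $\sum_n \logarea(\Delta\setminus f^{-n}(G))<\infty$, equivalently that the logarithmic area of the set of points whose orbit meets $\Delta\setminus G$ within the first $n$ steps stays bounded. Here the expansion estimate pays off: a component of $f^{-k}$ restricted to a piece of $G$ contracts logarithmic area by the reciprocal of the expansion factor, which is at most $|z|^{-(\rho(f)+2\varepsilon)}$ at the source; summing the geometric-type series over $k$ and $n$, using that the relevant source points have $|z|$ bounded below, gives a convergent bound. One also needs that $f$ does not fold too much — this is where the finite order hypothesis enters, via a bound on the number of preimages or on $\log M(r,f)$ controlling how $G$ and its complement distribute — so that the total logarithmic area of all preimage components of $\Delta\setminus G$ is controlled by $\logarea(\Delta\setminus G)$ times a summable factor.

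For the final statement, suppose $F(f)$ has no multiply connected component. We have already produced a set $S:=\bigcap_n f^{-n}(G)\subset I(f)$ of full logarithmic area in $\Delta$ (i.e. $\logarea(\Delta\setminus S)<\infty$). It remains to intersect with $J(f)$. By Eremenko and Lyubich~\cite{Eremenko1992} (quoted in the excerpt), if $f\in\B$ then $I(f)\subset J(f)$ and we would be done; but we are not assuming $f\in\B$. Instead, the absence of multiply connected Fatou components, combined with a classical fact of Baker, forces every component $U$ of $F(f)\cap I(f)$ to be a simply connected wandering domain or a Baker domain, and in either case $U$ is ``thin'' near $\infty$ in logarithmic area: a simply connected domain escaping to $\infty$ has, by Koebe-type or length-area arguments, finite logarithmic area intersection with $\Delta$ — or, more in the spirit of this paper, one shows directly that on such a $U$ the derivative bound in \eqref{4b} must fail on a large set, so $U\cap\Delta$ is already swallowed by $\Delta\setminus G$ up to finite logarithmic area. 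Hence $\logarea((\Delta\cap I(f))\setminus J(f))\leq\logarea(\Delta\setminus S)+\sum_U\logarea(U\cap\Delta)<\infty$, which combined with \eqref{4c} yields \eqref{4d}.

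The main obstacle I expect is the bookkeeping in step two: controlling the total logarithmic area of the infinitely many preimage components of the exceptional set $\Delta\setminus G$ under all iterates, without access to Koebe distortion (which the introduction explicitly says is avoided). The expansion estimate \eqref{pull} gives contraction of logarithmic area under each branch of $f^{-1}$, but to sum over all branches one needs a replacement for distortion control — presumably a direct estimate that $f$, being of finite order, cannot spread a small-logarithmic-area set into a large-logarithmic-area set under $f^{-1}$, i.e. a ``logarithmic-area version'' of the fact that $f$ has controlled valence. Making that precise, uniformly in the iterate count, is the technical heart of the argument.
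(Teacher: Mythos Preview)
Your plan for the first part is essentially the paper's approach, but the piece you flag as the ``main obstacle'' is exactly what you have not yet pinned down, and it is the heart of the proof. The paper resolves it as follows: for a finite-order function, Nevanlinna's first fundamental theorem gives
\[
n(r):=\max_{a\in\C} n(r,a)\leq T(er,f)+C\leq \log M(er,f)+C\leq r^{\rho(f)+\delta},
\]
so every point has at most $r^{\rho(f)+\delta}$ preimages in $\{|z|\leq r\}$. One then slices $\Delta$ into dyadic shells $A_k=\{z: 2^k\leq n(|z|)<2^{k+1}\}$ and applies the area formula on $f^{-1}(S)\cap X\cap A_k$: the expansion factor $|zf'(z)/f(z)|^2$ is $\geq n(|z|)^{1+2\varepsilon}\geq 2^{(1+2\varepsilon)k}$ there, while the multiplicity of the covering is $\leq 2^{k+1}$, so
\[
\logarea\bigl(f^{-1}(S)\cap X\cap A_k\bigr)\leq \frac{2}{2^{2\varepsilon k}}\logarea S.
\]
Summing over $k\geq K$ gives $\logarea(f^{-1}(S)\cap X\cap\{|z|\geq R\})\leq\tfrac12\logarea S$ for $R$ large, and then your iteration goes through exactly. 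So your intuition that ``controlled valence'' balances the expansion is right; the concrete mechanism is the Nevanlinna bound on $n(r)$, and no distortion estimate is needed.

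Your argument for the second part, however, has a genuine gap. Neither of your suggested routes works: it is simply false that a simply connected escaping Fatou component must have finite logarithmic area near $\infty$ (think of half-plane Baker domains), and there is no reason the logarithmic-derivative bound should fail on such a component. The paper's argument is quite different and much shorter. Take $z\in T\cap F(f)$ (where $T=\bigcap_n f^{-n}(G)$ in your notation), choose a disk $D(z,\delta)\subset F(f)$, and consider $g_n(\zeta)=f^n(\zeta)/f^n(z)$. Using the chain rule and the lower bound on $|\zeta f'(\zeta)/f(\zeta)|$ along the orbit of $z$ one gets $|g_n'(z)|\to\infty$, so $(g_n)$ is not normal. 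Montel's theorem then forces $g_n(D(z,\delta))$ to cover one of the circles $\partial D(0,1)$ or $\partial D(0,2)$ for large $n$, hence $f^n(D(z,\delta))\subset F(f)$ contains a circle of radius $\geq|f^n(z)|\to\infty$ centred at $0$, producing a multiply connected Fatou component. Contraposing, if $F(f)$ has no multiply connected component then $T\subset J(f)$, which is what you need.
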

We consider the example $f(z)=\sin z$. Then $\rho(f)=1$,
\begin{equation}\label{4f1}
|f(z)|\geq \frac12\! \left(e^{|\im z|}-1\right)
\geq 2|z| \quad\text{if }|\im z|\geq \log( 4|z|+1) 
\end{equation}
and
\begin{equation}\label{4f3}
\left|\frac{zf'(z)}{f(z)}\right| =\left|z\cot z\right|
\geq\frac12 |z| \geq |z|^{3/4}
\quad\text{if }|\im z|\geq 1 \text{ and } |z|\geq  16.
\end{equation}
It is easy to see that the set
\begin{equation}\label{4g}
\{ z\in\Delta\colon |\im z|<\log(4|z|+1)\}
\end{equation}
has finite logarithmic area. 
Thus Theorem~\ref{t1} yields that $I(\sin z)$ has positive measure.
Also, a result of Baker~\cite[p.~565]{Baker1984} says that
$F(f)$ does not have multiply connected components
if $f$ is bounded on a curve tending to~$\infty$.
Thus we also find that $J(\sin z)$ has positive measure.

With the same method we could also treat the functions $\sin(\alpha z+\beta)$ 
considered by McMullen and thus obtain another proof of his result that the Julia set
of these functions has positive measure.
More generally, the hypothesis of Theorem~\ref{t1} is satisfied
for example
if $f(z)=P(z)\sin(\alpha z+\beta)$ with a polynomial~$P$.
Moreover, the result of Baker just mentioned holds more generally if
$\log|f(z)|=\O(\log|z|)$ for $z$ on some curve tending to $\infty$; see~\cite[Theorem~10]{Bergweiler1993}.
We thus find that $J(f)$ has positive area for such~$f$.
Note that $f$ is not in the Eremenko-Lyubich class if $P$ is non-constant.

Theorem~\ref{t1} also applies to the functions
\begin{equation} \label{intro2}
f(z)=\sum_{k=0}^{n}a_k\exp\!\left(b_k z\right)
\end{equation}
considered in~\cite{Bergweiler2016,Sixsmith2015}.
Here the $a_k$ and $b_k$ are non-zero constants satisfying
$\arg b_k<\arg b_{k+1} \leq \arg b_k +\pi$ for $0\leq k\leq n-1$ and
$\arg b_0\leq\arg b_{n}-\pi$, with arguments chosen in $[0,2\pi)$.
More generally, one can assume that the $a_k$ are polynomials that do not vanish identically.

A subset $A(f)$ of $I(f)$ called the \emph{fast escaping set} was introduced in~\cite{Bergweiler1999}.
It also plays an important role in transcendental dynamics; see, e.g., \cite{Rippon2005a,Rippon2012}.
In order to define it, let $M^n(r,f)$ denote the $n$-th iterate of $M(r,f)$ with respect
to the first variable; that is,
\begin{equation}\label{4b4}
M^1(r,f)=M(r,f)\quad \text{and} \quad M^{n}(r,f)=M(M^{n-1}(r,f),f) \quad \text{for  } n\geq 2.
\end{equation}
We note that there exists $R>0$ such that $M(r,f)>r$ for $r\geq R$.
With such a value of $R$ the fast escaping set 
$A(f)$ is defined as the set of all $z\in\C$ for which there exists $L\in\N$
such that $|f^n(z)|\geq M^{n-L}(R,f)$ for $n>L$. The definition is independent of the value of~$R$.
\begin{theorem}\label{t2}
Let $f$ be an entire function of finite order.
Let $\varepsilon>0$ and suppose that 
\begin{equation}\label{4b2}
\logarea\!\left\{ z\in\Delta \colon \left|\frac{zf'(z)}{f(z)}\right| < |z|^{\rho(f)/2+\varepsilon}
\ \text{or} \ 
 \left|f(z)\right| < \exp\!\left(|z|^\varepsilon\right) \right\} <\infty.
\end{equation}
Then the conclusion of Theorem~$\ref{t1}$ holds with $I(f)$ replaced by $A(f)$.
\end{theorem}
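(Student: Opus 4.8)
\medskip
\noindent\emph{Proof proposal.}\quad The plan is to deduce Theorem~\ref{t2} from the proof of Theorem~\ref{t1}. First observe that \eqref{4b2} implies \eqref{4b}: since $\exp(|z|^\varepsilon)>(1+\varepsilon)|z|$ for all sufficiently large $|z|$, the set in \eqref{4b2} differs from the one in \eqref{4b} only by a set of finite logarithmic area. Hence Theorem~\ref{t1} already gives $\logarea(\Delta\setminus I(f))<\infty$ and, if $F(f)$ has no multiply connected component, also $\logarea(\Delta\setminus(I(f)\cap J(f)))<\infty$. Thus the whole task is to replace $I(f)$ by the smaller set $A(f)$; and for the second conclusion it suffices to prove $\logarea(\Delta\setminus A(f))<\infty$, because then $\logarea(\Delta\setminus(A(f)\cap J(f)))\le\logarea(\Delta\setminus A(f))+\logarea(\Delta\setminus J(f))<\infty$ by Theorem~\ref{t1}.

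Next I would revisit the construction behind Theorem~\ref{t1}. Fix $t_0\ge1$ with $\exp(t^\varepsilon)>(1+\varepsilon)t$ for $t\ge t_0$ and put
\[
G=\Bigl\{z\in\Delta:\ |z|\ge t_0,\ \bigl|zf'(z)/f(z)\bigr|\ge|z|^{\rho(f)/2+\varepsilon},\ |f(z)|\ge\exp\bigl(|z|^\varepsilon\bigr)\Bigr\}.
\]
By \eqref{4b2}, together with $\logarea\{z\in\Delta:|z|<t_0\}<\infty$, we have $\logarea(\Delta\setminus G)<\infty$, and on $G$ one has $|f(z)|>(1+\varepsilon)|z|$. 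Presumably the proof of Theorem~\ref{t1} uses only three features of its ``good set'': that its complement in $\Delta$ has finite logarithmic area; that $|zf'(z)/f(z)|\ge|z|^{\rho(f)/2+\varepsilon}$ holds on it, which together with the finite order of $f$ drives the measure-theoretic estimate for preimages; and that $|f(z)|>|z|$ there, which forces escape. Granting this, running the argument with $G$ in place of the original good set yields a set $X\subset\Delta$ with $\logarea(\Delta\setminus X)<\infty$ all of whose points have their entire forward orbit contained in $G$. For such a point $z$, writing $r_n=|f^n(z)|$, we then get $r_n\to\infty$ and, crucially, $r_{n+1}\ge\exp(r_n^\varepsilon)$ for every $n$.

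The core of the matter is to show $X\subset A(f)$, i.e.\ that an escaping orbit satisfying $r_{n+1}\ge\exp(r_n^\varepsilon)$ is in fact fast escaping. Writing $\rho=\rho(f)$ and using $\rho<\infty$, fix $R\ge e$ with $M(r,f)>r$ and $\log M(r,f)\le r^{\rho+1}$ for $r\ge R$; then $M^k(R,f)\le E_k$ where $E_0=R$ and $E_{k+1}=\exp(E_k^{\rho+1})$. The decisive point is that, although $E_k$ grows like a tower of height $k$, the iterated logarithm $\log^{(k)}E_k$ (the $k$-th iterate of $\log$ applied to $E_k$) stays below a constant $C_0=C_0(R,\rho)$ that is independent of $k$; this follows by a short induction, since peeling off two logarithms turns $E_k$ into roughly $(\rho+1)E_{k-2}^{\rho+1}$, the additive corrections staying bounded. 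Consequently, if $N$ is chosen so large that $r_N$ — and hence, after a fixed number $m$ of further iterations, $r_{N+m}$ — exceeds a bound depending only on $\varepsilon$, $R$, $\rho$, then one checks by induction on $k$ that $r_{N+m+k}\ge E_k\ge M^k(R,f)$ for all $k\ge0$: applying $\log^{(k)}$ reduces this, up to bounded errors, to the single requirement that $r_{N+m}^\varepsilon$ exceed a fixed multiple of $C_0$. This is precisely the statement that $z\in A(f)$, with $L=N+m$ in the definition. Hence $\logarea(\Delta\setminus A(f))\le\logarea(\Delta\setminus X)<\infty$, so $\area A(f)>0$, and the assertion about $A(f)\cap J(f)$ follows as noted above.

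I expect the one genuinely new difficulty to be this last step. When $\varepsilon<\rho(f)$ the orbit need \emph{not} dominate $M(r_n,f)$ at each individual step, so a term-by-term comparison with the iterates of $M(\cdot,f)$ fails outright; what saves the argument is the time-delay $L$ built into the definition of $A(f)$ together with the boundedness of $\log^{(k)}M^k(R,f)$ in $k$, which lets a sufficiently advanced tower of $\varepsilon$-exponentials overtake $\bigl(M^k(R,f)\bigr)_k$ permanently after a bounded delay. The remainder — in particular the measure-theoretic heart of the proof, and the fact that no distortion estimates enter — is exactly as in Theorem~\ref{t1}.
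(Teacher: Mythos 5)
Your proposal is correct, and its overall architecture is the paper's: rerun the machinery of Theorems~\ref{t3} and~\ref{t1} with the good set strengthened from $|f(z)|\ge(1+\varepsilon)|z|$ to $|f(z)|\ge\exp\!\left(|z|^\varepsilon\right)$ (the paper does exactly this, changing only the definition of $Y$ in~\eqref{1b}), so that the surviving set consists of points whose orbits are minorized by the tower of $\varepsilon$-exponentials, and then use finite order to majorize $M^k(R,f)$ by the tower $E_\mu^k(R)$ with $\mu>\rho(f)$. The one genuine point of divergence is how the two towers are compared. The paper isolates this as Lemma~\ref{la1}, $E_\alpha^{k}(x)\ge E_\beta^{k-2}(x)$ for $k\ge4$ and $x\ge x_0$, proved by conjugating $E_\alpha$ to $F_\alpha(x)=\alpha e^x$ through a double exponential; this gives a uniform delay ($L=2$), so the whole auxiliary set $B(f)$ is swallowed by $A(f)$ at once. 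You instead argue via the boundedness in $k$ of $\log^{(k)}M^k(R,f)$ together with an induction along each individual orbit, producing a point-dependent delay $L=L(z)$ -- equally sufficient, since $L$ may depend on $z$ in the definition of $A(f)$. Your iterated-logarithm claim is true and provable by just the short induction you indicate (for instance $\log^{(i)}F_\mu^{\,j}(y)\le\mu+F_\mu^{\,j-i}(y)$ with $F_\mu(x)=\mu e^x$ yields $\log^{(k)}E_\mu^k(R)\le\mu+\mu R^{\mu}$ for all $k$), and the matching lower bound $\log^{(k)}|f^{L+k}(z)|\ge\varepsilon\log|f^{L}(z)|-O(1)$ follows from $\log\log r_{n+1}\ge\varepsilon\log r_n$ once the orbit stays above a threshold depending only on $\varepsilon$; so your sketch closes, though writing it out amounts to reproving Lemma~\ref{la1} in additive (iterated-log) coordinates rather than avoiding it. A second, harmless deviation: you obtain the conclusion about $A(f)\cap J(f)$ by combining $\logarea(\Delta\backslash A(f))<\infty$ with Theorem~\ref{t1}, whereas the paper simply observes that the second half of the proof of Theorem~\ref{t3} applies verbatim to the modified set; both routes are valid.
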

The arguments used to show that the hypotheses of Theorem~\ref{t1} are satisfied for
$f(z)=\sin z$ or, more generally, for
$f(z)=P(z)\sin(\alpha z+\beta)$ with a polynomial~$P$ and the functions given by~\eqref{intro2},
can easily be modified to show that 
the hypotheses of Theorem~\ref{t2} hold for these functions as well.

We will deduce the above theorems from a more general result which does not involve the order.
To state this result, denote for an entire function $f$ and $a\in\C$
by $n(r,a)$ the number of $a$-points of $f$ in $\{z\colon |z|\leq r\}$. Put
\begin{equation}\label{1a}
n(r)=\max_{a\in\C} n(r,a).
\end{equation}
\begin{theorem}\label{t3}
Let $f$ be a transcendental entire function satisfying
\begin{equation}\label{4b1}
\logarea\!\left\{ z\in\Delta \colon \left|\frac{zf'(z)}{f(z)}\right| 
< n(|z|)^{1/2+\varepsilon}
\ \text{or} \ 
 \left|f(z)\right| < (1+\varepsilon)|z| \right\} <\infty
\end{equation}
for some $\varepsilon>0$.
Then~\eqref{4c} holds.  In particular, $\area I(f)> 0$.

If, in addition, $F(f)$ does not have a multiply connected component, 
then~\eqref{4d} also holds and thus $\area (I(f)\cap J(f))> 0$.
\end{theorem}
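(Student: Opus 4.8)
The plan is to reduce the theorem to a convergent sum of logarithmic areas of ``bad pull‑back'' sets, using only the expansion encoded in $zf'/f$ together with the $a$‑point counting function. Write
$B=\{z\in\Delta\colon |zf'(z)/f(z)|<n(|z|)^{1/2+\varepsilon}\ \text{or}\ |f(z)|<(1+\varepsilon)|z|\}$, so $\logarea B<\infty$ by hypothesis, and put $\Delta_R=\{z\colon|z|\ge R\}$. Two facts drive everything: (i) on $\Delta\setminus B$ one has $|f(z)|\ge(1+\varepsilon)|z|$, so $f$ maps $\Delta_R\setminus B$ into $\Delta_R$ for $R\ge1$; (ii) the change of variables $w=f(z)$ transforms $du\,dv/|w|^2$ into $|zf'(z)/f(z)|^2\,dx\,dy/|z|^2$, and on $\Delta\setminus B$ this density is at least $n(|z|)^{1+2\varepsilon}$. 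By (i), the set $G_R$ of $z$ with $f^n(z)\in\Delta_R\setminus B$ for all $n\ge0$ satisfies $|f^n(z)|\ge(1+\varepsilon)^nR$, hence $G_R\subseteq I(f)$. Since $\logarea(\Delta\setminus\Delta_R)<\infty$, it suffices to bound $\logarea(\Delta_R\setminus G_R)$ for one large $R$. Using (i) again one checks that the orbit of a point of $\Delta_R\setminus G_R$ leaves $\Delta_R\setminus B$ only by landing in $B$, so $\Delta_R\setminus G_R$ is the disjoint union of the sets $E_N=\{z\colon f^n(z)\in\Delta_R\setminus B\ (0\le n<N),\ f^N(z)\in B\cap\Delta_R\}$, and the target becomes $\sum_N\logarea E_N<\infty$.

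The key step is to estimate each $E_N$ by slicing along the radial itinerary of the orbit, using annuli adapted to $n$ rather than to the radius. For large $k$ set $\rho_k=\sup\{r\colon n(r)<2^k\}$ (finite since $f$ is transcendental, so $n(r)\to\infty$) and $A_k=\{z\colon\rho_k<|z|<\rho_{k+1}\}$; then $2^k\le n(|z|)<2^{k+1}$ on $A_k$, any value of $f$ has at most $2^{k+1}$ preimages in $A_k$, and by (ii) the logarithmic‑area Jacobian of $f$ on $A_k\setminus B$ is at least $2^{k(1+2\varepsilon)}$. Choose $R=\rho_{K_0}$ with $K_0$ so large that $\gamma_k:=2^{\,1-2\varepsilon k}<1$ for all $k\ge K_0$. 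For a nondecreasing itinerary $\mathbf k=(k_0\le\dots\le k_N)$ with $k_0\ge K_0$ let $E_N(\mathbf k)$ be the set of $z\in E_N$ with $f^n(z)\in A_{k_n}$ for $0\le n\le N$; the $E_N(\mathbf k)$ partition $E_N$ up to a null set, because moduli increase strictly along the orbit and $n$ is nondecreasing. Applying the area formula to $f^N$ on $E_N(\mathbf k)$, where the Jacobian (by the chain rule $\prod_{n<N}|z_nf'(z_n)/f(z_n)|^2$, $z_n=f^n(z)$) is at least $\prod_{n<N}2^{k_n(1+2\varepsilon)}$ while the number of points of $f^{-N}(w)\cap E_N(\mathbf k)$ is at most $\prod_{n<N}2^{k_n+1}$ (at most $2^{k_n+1}$ choices of preimage in $A_{k_n}$ at each step), and using $f^N(E_N(\mathbf k))\subseteq A_{k_N}\cap B$, yields
\[
\logarea E_N(\mathbf k)\ \le\ \Big(\prod_{n=0}^{N-1}\gamma_{k_n}\Big)\,\logarea\!\big(A_{k_N}\cap B\big).
\]

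Summing finishes the first assertion. Fixing $k_N=m$ and summing over $K_0\le k_0\le\dots\le k_{N-1}\le m$ produces the complete homogeneous symmetric polynomial $h_N(\gamma_{K_0},\dots,\gamma_m)$, and $\sum_{N\ge0}h_N(\gamma_{K_0},\dots,\gamma_m)=\prod_{k=K_0}^{m}(1-\gamma_k)^{-1}\le P$, a finite constant independent of $m$ since $\sum_{k\ge K_0}\gamma_k<\infty$ and each $\gamma_k<1$. Hence
\[
\sum_{N\ge0}\logarea E_N\ \le\ P\sum_{m\ge K_0}\logarea(A_m\cap B)\ =\ P\,\logarea(B\cap\Delta_R)\ \le\ P\,\logarea B\ <\ \infty,
\]
which is \eqref{4c}. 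Then $\area I(f)>0$: the logarithmic area of $(\Delta\setminus I(f))\cap\{R\le|z|\le 2R\}$ tends to $0$, so for large $R$ the set $I(f)$ fills most of that annulus and in particular has positive Euclidean area there. For \eqref{4d} one intersects with $J(f)$: as $J(f)=\partial I(f)$, the part of $G_R$ outside $J(f)$ is a union of Fatou components on which $f^n\to\infty$, which are simply connected by hypothesis; using that all Fatou components are simply connected one argues that these escaping components can account for only finite logarithmic area near $\infty$, giving \eqref{4d} and $\area(I(f)\cap J(f))>0$.

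The main obstacle is the choice of annuli in the key step. With the naive dyadic annuli $\{2^j\le|z|<2^{j+1}\}$ the ratio of the preimage count to the logarithmic Jacobian, $n(2^{j+1})/n(2^j)^{1+2\varepsilon}$, need not even be bounded when $n(r)$ grows irregularly, and the iteration collapses; it is precisely the passage to annuli on which $n$ is dyadically constant that makes the per‑step factor $\gamma_k=2^{1-2\varepsilon k}$ both summable and eventually $<1$, after which everything is the elementary‑symmetric‑polynomial bookkeeping above. A secondary delicate point is the Fatou‑component part of the second assertion, where large simply connected escaping wandering domains must be excluded; this is the place where the hypothesis that $F(f)$ has no multiply connected component is genuinely used.
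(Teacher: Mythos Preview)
Your argument for the first assertion~\eqref{4c} is correct and rests on the same idea as the paper's: slice $\Delta_R$ into annuli $A_k$ on which $n(|z|)$ lies between consecutive powers of~$2$, so that on $A_k\setminus B$ the logarithmic Jacobian of~$f$ is at least $2^{k(1+2\varepsilon)}$ while every value has at most $2^{k+1}$ preimages in~$A_k$. The bookkeeping differs, however. Rather than track full itineraries $(k_0,\dots,k_N)$ and sum via complete homogeneous symmetric polynomials, the paper simply sums the per-annulus estimate over~$k\ge K$ at each pull-back step to obtain a uniform contraction
\[
\logarea\!\bigl(f^{-1}(S)\cap(\Delta_R\setminus B)\bigr)\le\tfrac12\,\logarea S
\]
for~$R$ large, and then iterates to a geometric series. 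Your approach works (the monotonicity $k_0\le\dots\le k_N$ indeed follows from $|f(z)|>(1+\varepsilon)|z|$), but the itinerary decomposition buys nothing over this one-step contraction.

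The second assertion has a genuine gap. Your sentence ``these escaping components can account for only finite logarithmic area near~$\infty$'' is unsupported, and in general it is false: a simply connected Fatou component can be a half-plane, which has infinite logarithmic area in~$\Delta$, and Baker domains of this shape do occur for entire functions. The paper does \emph{not} bound the logarithmic area of $G_R\cap F(f)$; it shows $G_R\cap F(f)=\emptyset$. Suppose $z\in G_R\cap F(f)$ and take $D(z,\delta)\subset F(f)$. Setting $g_n(\zeta)=f^n(\zeta)/f^n(z)$ and using the telescoping product for $z(f^n)'/f^n$ together with the lower bound $|z_jf'(z_j)/f(z_j)|\ge n(|z_j|)^{1/2+\varepsilon}\to\infty$ along the orbit, one gets $|g_n'(z)|\to\infty$; hence $(g_n)$ is not normal on $D(z,\delta)$. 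Montel's theorem then forces $g_n(D(z,\delta))$ to cover $\partial D(0,1)$ or $\partial D(0,2)$ for large~$n$, so $f^n(D(z,\delta))\subset F(f)$ contains circles centred at~$0$ of radius $\ge|f^n(z)|\to\infty$, producing a multiply connected Fatou component. It is this contradiction --- not an area estimate --- that uses the hypothesis.
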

In the results above 
the hypotheses concern
both
$|f(z)|$ and $|zf'(z)/f(z)|$.
If $f\in\B$, then $|zf'(z)/f(z)|$ can be bounded in terms of $|f(z)|$.
In fact, we have the following result which follows directly from~\cite[Lemma~1]{Eremenko1992};
see~\cite[Lemma~2]{Bergweiler1995}.
\begin{proposition} \label{la2}
Let $f\in\B$.
Then there exists $R>0$ such that
\begin{equation}\label{pr1}
\left|\frac{zf'(z)}{f(z)}\right| \geq \frac{1}{4\pi} \log \frac{|f(z)|}{R}
\end{equation}
for all $z\in\C$ with $f(z)\neq 0$.
\end{proposition}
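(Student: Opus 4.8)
The plan is to perform the logarithmic change of variables in both the source and the target of $f$, thereby reducing the statement to an application of Koebe's one-quarter theorem. Since $f\in\B$, fix $R>0$ so that all critical and asymptotic values of $f$ lie in $\{w\colon|w|<R\}$. If $|f(z)|\le R$, then the right-hand side of~\eqref{pr1} is $\le 0$ while the left-hand side is $\ge 0$, so~\eqref{pr1} holds trivially; hence I assume $|f(z)|>R$, so in particular $f(z)\neq0$. Let $D=\{w\colon|w|>R\}$ and let $U$ be the component of $f^{-1}(D)$ containing~$z$. As $D$ contains no critical or asymptotic values of $f$, the map $f\colon U\to D$ is a covering, and $U$ is simply connected; this last fact is the standard description of the tracts of a function in the Eremenko--Lyubich class, and it is the only place where membership in $\B$ is used (see~\cite[Lemma~1]{Eremenko1992}).

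The exponential function maps the half-plane $P=\{\sigma\colon\re\sigma>\log R\}$ onto $D$ as a universal covering. Fix $\zeta_0$ with $e^{\zeta_0}=z$ and let $\Omega$ be the component of $\{\zeta\colon|f(e^\zeta)|>R\}=\exp^{-1}(f^{-1}(D))$ containing $\zeta_0$; since $e^{\zeta_0}=z\in U$, $\Omega$ is in fact a component of $\exp^{-1}(U)$, and because $U$ is simply connected, $\exp\colon\Omega\to U$ is a conformal isomorphism while the components $\Omega+2\pi ik$, $k\in\Z$, of $\exp^{-1}(U)$ are pairwise disjoint. Since $\Omega$ is simply connected and $f(e^\zeta)\neq0$ there, $\tilde f(\zeta):=\log f(e^\zeta)$ defines a holomorphic function on $\Omega$ which, being a lift of the universal covering $f\circ\exp\colon\Omega\to D$ through the universal covering $\exp\colon P\to D$, is itself a conformal isomorphism $\tilde f\colon\Omega\to P$. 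Differentiating $e^{\tilde f(\zeta)}=f(e^\zeta)$ gives $\tilde f'(\zeta)=e^\zeta f'(e^\zeta)/f(e^\zeta)$, so that
\[
|\tilde f'(\zeta_0)|=\left|\frac{zf'(z)}{f(z)}\right|
\qquad\text{and}\qquad
\re\tilde f(\zeta_0)=\log|f(z)|.
\]

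Now let $\psi=\tilde f^{-1}\colon P\to\Omega$, put $\sigma_0=\tilde f(\zeta_0)\in P$ and $d=\re\sigma_0-\log R=\log(|f(z)|/R)>0$. Since the distance from $\sigma_0$ to $\partial P$ is $d$, the disk $B(\sigma_0,d)$ lies in $P$, so Koebe's one-quarter theorem applied to the univalent function $\psi$ gives $\Omega\supseteq\psi(B(\sigma_0,d))\supseteq B(\zeta_0,\tfrac14 d\,|\psi'(\sigma_0)|)$. Thus $\Omega$ contains an open disk of radius $\tfrac14 d\,|\psi'(\sigma_0)|$; but a disk of radius larger than $\pi$ would meet its translate by $2\pi i$, which is impossible since $\Omega$ and $\Omega+2\pi i$ are disjoint. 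Hence $\tfrac14 d\,|\psi'(\sigma_0)|\le\pi$, and since $\tilde f'(\zeta_0)\,\psi'(\sigma_0)=1$ we conclude
\[
\left|\frac{zf'(z)}{f(z)}\right|=|\tilde f'(\zeta_0)|=\frac{1}{|\psi'(\sigma_0)|}\ge\frac{d}{4\pi}=\frac{1}{4\pi}\log\frac{|f(z)|}{R},
\]
which is~\eqref{pr1}.

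The step I expect to need the most care is the assertion that the tract $U$ is simply connected, so that $\exp\colon\Omega\to U$ and $\tilde f\colon\Omega\to P$ really are conformal isomorphisms and the translates $\Omega+2\pi ik$ are pairwise disjoint; this, rather than the final estimate, is where the hypothesis $f\in\B$ does the work. Everything after that is a one-line application of Koebe's theorem together with the elementary observation that a domain whose $2\pi i$-translates are disjoint cannot contain a disk of radius exceeding~$\pi$.
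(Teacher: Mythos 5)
Your argument is exactly the Eremenko--Lyubich logarithmic change of variable combined with Koebe's one-quarter theorem and the disjointness of the $2\pi i$-translates of $\Omega$, which is precisely the source the paper cites for this proposition; the lift construction, the identity $\tilde f'(\zeta_0)=zf'(z)/f(z)$, and the final Koebe estimate are all carried out correctly. There is, however, one genuine gap: your choice of $R$ (a bound for the critical and asymptotic values only) is not sufficient. If $|f(0)|>R$, then $0$ lies in a tract $U$, and then $\exp^{-1}(U)=\exp^{-1}(U\setminus\{0\})$ is a single $2\pi i$-periodic domain, because $U\setminus\{0\}$ is no longer simply connected and the loop around $0$ lifts to the deck transformation $\zeta\mapsto\zeta+2\pi i$. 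In that case $\exp\colon\Omega\to U$ is not injective, the translates $\Omega+2\pi ik$ are not pairwise disjoint, and both the claim that $\tilde f\colon\Omega\to P$ is a conformal isomorphism and the concluding ``no disk of radius exceeding $\pi$'' step collapse. This is not just a presentational issue: with your $R$ the asserted inequality can actually fail. Take $f(z)=e^z$, whose only finite singular value is $0$, and $R=\tfrac12$: at $z=\tfrac1{100}$ the left-hand side of \eqref{pr1} equals $\tfrac1{100}$, while the right-hand side equals $\frac{1}{4\pi}\left(\tfrac1{100}+\log 2\right)>\tfrac1{20}$. Relatedly, the case $z=0$ is not covered by your case distinction, since there is no $\zeta_0$ with $e^{\zeta_0}=0$; as the left-hand side of \eqref{pr1} vanishes at $z=0$, the statement itself forces $R\geq|f(0)|$.

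The repair is immediate: enlarge $R$ so that in addition $R\geq|f(0)|$, i.e.\ $0\notin f^{-1}(D)$. Then every tract omits the origin, so $\exp^{-1}(U)$ really is the disjoint union of the components $\Omega+2\pi ik$, each mapped conformally onto $U$ by $\exp$, the point $z=0$ falls under the trivial case $|f(z)|\leq R$, and the rest of your proof goes through verbatim. Citing the standard description of the tracts (simple connectivity, $f\colon U\to D$ a universal covering) is acceptable here, since the paper itself derives the proposition by citing the same lemma of Eremenko and Lyubich.
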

The following result is a simple consequence of Theorem~\ref{t2} and Proposition~\ref{la2}.
\begin{theorem}\label{t4}
Let $f\in\B$ be of finite order.
Suppose that
\begin{equation}\label{4b3}
\logarea\!\left\{ z\in\Delta \colon 
\left|f(z)\right| < \exp\!\left(|z|^{\rho(f)/2+\varepsilon}\right) \right\} <\infty.
\end{equation}
for some $\varepsilon>0$.
Then
\begin{equation}\label{4b5}
\logarea (\Delta\backslash A(f))<\infty.
\end{equation}
\end{theorem}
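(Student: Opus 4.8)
The plan is to deduce the hypothesis~\eqref{4b2} of Theorem~\ref{t2}, for a slightly smaller value of the parameter, from the hypothesis~\eqref{4b3} of Theorem~\ref{t4} together with Proposition~\ref{la2}; the conclusion~\eqref{4b5} is then immediate from Theorem~\ref{t2}. Throughout write $\rho=\rho(f)$, which is finite by assumption, and set $\varepsilon'=\varepsilon/2$.

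I would denote the set occurring in~\eqref{4b3} by
\[
S=\left\{z\in\Delta\colon |f(z)|<\exp\!\left(|z|^{\rho/2+\varepsilon}\right)\right\},
\]
which has finite logarithmic area by hypothesis, and show that the set in~\eqref{4b2}, with $\varepsilon'$ in place of $\varepsilon$, differs from $S$ by at most a bounded annulus. So let $z\in\Delta\setminus S$, that is, $|f(z)|\geq\exp(|z|^{\rho/2+\varepsilon})$; in particular $f(z)\neq 0$. Since $\rho/2+\varepsilon\geq\varepsilon'$ and $|z|\geq 1$, we get at once $|f(z)|\geq\exp(|z|^{\varepsilon'})$. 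Moreover, with $R$ as in Proposition~\ref{la2}, inequality~\eqref{pr1} gives
\[
\left|\frac{zf'(z)}{f(z)}\right|\ \geq\ \frac{1}{4\pi}\log\frac{|f(z)|}{R}\ \geq\ \frac{1}{4\pi}\left(|z|^{\rho/2+\varepsilon}-\log R\right),
\]
and since $|z|^{\rho/2+\varepsilon}=|z|^{\varepsilon'}\,|z|^{\rho/2+\varepsilon'}$ with $\tfrac{1}{4\pi}|z|^{\varepsilon'}\to\infty$, the right-hand side exceeds $|z|^{\rho/2+\varepsilon'}$ once $|z|$ is larger than a constant $R_1$ depending only on $R$ and $\varepsilon$. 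Hence any $z\in\Delta$ belonging to the set in~\eqref{4b2} (with $\varepsilon'$) either lies in $S$ or satisfies $|z|<R_1$.

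The annulus $\{z\colon 1\leq|z|<R_1\}$ has logarithmic area $2\pi\log R_1<\infty$, so the set in~\eqref{4b2} has finite logarithmic area. Theorem~\ref{t2} then yields $\logarea(\Delta\setminus A(f))<\infty$, which is~\eqref{4b5}.

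I expect no real obstacle: the argument is a short deduction. The only step requiring a little care is absorbing the factor $\tfrac{1}{4\pi}$ and the additive term $-\log R$ from~\eqref{pr1} into the clean power bound $|z|^{\rho/2+\varepsilon'}$, which is precisely what passing from $\varepsilon$ to $\varepsilon'=\varepsilon/2$ allows; this also takes care of the borderline case $\rho=0$.
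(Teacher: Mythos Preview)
Your proof is correct and follows essentially the same route as the paper: use Proposition~\ref{la2} to show that the set in~\eqref{4b2}, with a smaller exponent, is contained (up to a bounded annulus) in the set in~\eqref{4b3}, and then invoke Theorem~\ref{t2}. The paper phrases the implication contrapositively (small $|zf'(z)/f(z)|$ forces small $|f(z)|$) and uses an unspecified $\delta<\varepsilon$ rather than $\varepsilon/2$, and it additionally notes that $F(f)$ has no multiply connected components to obtain the stronger conclusion involving $J(f)$---but for~\eqref{4b5} itself your argument is complete.
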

We recall that by the result of Eremenko and Lyubich already mentioned 
we have $A(f)\subset I(f)\subset J(f)$ 
for $f\in\B$. Under the hypotheses of Theorem~\ref{t4} we thus have, in 
particular, 
$\logarea (\Delta\backslash J(f))<\infty$ and hence $\area J(f)>0$.

As an example where Theorems~\ref{t2} and~\ref{t4} apply to we consider certain Poincar\'e functions.
We recall the definition of these functions: let $p$ be a polynomial of degree $d\geq 2$ and 
let $z_0$ be a repelling fixed point of~$p$; that is, $p(z_0)=z_0$ and $\lambda:=p'(z_0)$ satisfies
$|\lambda|>1$. Then Schr\"oder's functional equation 
\begin{equation}\label{4s}
f(\lambda z)=p(f(z))
\end{equation}
has a solution $f$ which is holomorphic in a neighborhood of $0$ and satisfies $f(0)=z_0$;
see~\cite[Theorem 8.2]{Milnor2006}. It can be normalized to satisfy $f'(0)=1$.
This solution $f$ actually extends to a transcendental entire function which
is called the \emph{Poincar\'e function} of $p$ at~$z_0$; see~\cite[Corollary 8.1]{Milnor2006}.
It is well-known \cite[Chapter II, Section~III.8]{Valiron1923} that $\rho(f)=\log d/\log|\lambda|$.
We note that the trigonometric functions arise as Poincar\'e functions of Chebychev polynomials.

The dynamics of Poincar\'e functions have been studied
in~\cite[Section~3]{Epstein2015} 
and~\cite{Mihaljevic2012}.
It is known that $f\in\B$ if and only if the orbit $\{p^n(c)\colon n\in\N\}$ is bounded
for every critical point $c$ of~$p$; see~\cite[Proposition~4.2]{Mihaljevic2012}
or~\cite[Section~3.1]{Epstein2015}.
The latter condition is satisfied if and only if $J(p)$ is connected~\cite[Theorem 9.5]{Milnor2006}.

A polynomial $p$ is called \emph{semihyperbolic} if there exist $\varepsilon>0$ and
$N\in\N$ such that if $z\in J(f)$, $n\in\N$ and $V$ is a component of $p^{-n}(D(z,\varepsilon))$,
then the degree of the proper map $p^n\colon V\to D(z,\varepsilon)$ is at most~$N$.
Here $D(z,\varepsilon)$ denotes the open disk of radius $\varepsilon$ around~$z$.
The concept of semihyperbolicity was introduced by Carleson, Jones and Yoccoz~\cite{Carleson1994},
who gave various characterizations of it.
\begin{theorem}\label{t5}
Let $p$ be a semihyperbolic polynomial without attracting periodic points
 and let $f$ be a Poincar\'e function of~$p$. Then
\begin{equation}\label{4b6}
\logarea (\Delta\backslash (A(f)\cap J(f)))<\infty.
\end{equation}
In particular, $\area(A(f)\cap J(f))>0$.
\end{theorem}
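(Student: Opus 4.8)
The plan is to verify the hypotheses of Theorem~\ref{t4} (or Theorem~\ref{t2}) for the Poincar\'e function $f$, so that the conclusion follows at once together with the fact $A(f)\subset I(f)\subset J(f)$ valid for $f\in\B$. The first step is to record the two facts already quoted in the excerpt: a semihyperbolic polynomial without attracting cycles has $J(p)$ equal to the whole filled Julia set with every critical orbit bounded, so $f\in\B$ by \cite[Proposition~4.2]{Mihaljevic2012}; and $\rho(f)=\log d/\log|\lambda|$. Thus, by Theorem~\ref{t4}, it suffices to prove the single estimate
\begin{equation}\label{goal5}
\logarea\!\left\{ z\in\Delta \colon \left|f(z)\right| < \exp\!\left(|z|^{\rho(f)/2+\varepsilon}\right) \right\} <\infty
\end{equation}
for some $\varepsilon>0$, i.e.\ to show that $f$ is \emph{large} outside a set of finite logarithmic area. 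Equivalently, writing $f$ via the functional equation $f(\lambda^n z)=p^n(f(z))$, one must understand where $|f|$ can be small.

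The heart of the matter is a lower bound for $|f(z)|$ away from a thin exceptional set, and here semihyperbolicity enters decisively. The idea: by Carleson--Jones--Yoccoz \cite{Carleson1994}, semihyperbolicity is equivalent to a Hölder-type expansion property for $p$ on a neighborhood of $J(p)$, and in particular $p$ is ``expanding along orbits'' in a weak sense. Near $0$ the Poincar\'e function satisfies $f(z)=z_0+z+O(z^2)$, so on a fixed small disk $|z|\le r_0$ we have good two-sided control of $|f(z)-z_0|$. For general $z$ with $|z|$ large, write $z=\lambda^n w$ with $|w|$ of order $1$ (choosing $n=\lfloor \log|z|/\log|\lambda|\rfloor$), so $f(z)=p^n(f(w))$. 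The point $f(w)$ ranges over a compact annulus-like region, and one needs: for most such starting points $\zeta=f(w)$, the iterate $|p^n(\zeta)|$ grows at least like $\exp(c\,d^{\,n})$ — which after the substitution $d^n\asymp |z|^{\rho(f)}$ gives $|f(z)|\ge \exp(c|z|^{\rho(f)})$, more than enough for \eqref{goal5}. The bad set consists of starting points $\zeta$ whose orbit stays bounded for a long time, i.e.\ stays near the filled Julia set $K(p)=J(p)$; semihyperbolicity controls the measure (here: logarithmic area, after pulling back under $z\mapsto\lambda^n w\mapsto f(w)$ and summing a geometric series in $n$) of the set of $z$ for which $f(w)$ lies within distance $\varepsilon$ of $J(p)$ for the relevant number of steps. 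A complementary, easier region is where $|\zeta|$ is already large, where the polynomial escape estimate $|p(\zeta)|\ge \tfrac12|\zeta|^d$ for $|\zeta|$ large gives the bound directly and the corresponding set of $z$ has finite logarithmic area by a crude comparison of $|f|$ with $M(|z|,f)$.

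Concretely I would proceed as follows. (1) Fix $R_0$ with $|p(\zeta)|\ge |\zeta|^{(d+1)/2}$ for $|\zeta|\ge R_0$, so any orbit entering $\{|\zeta|\ge R_0\}$ escapes super-exponentially. (2) Let $U$ be the $\varepsilon$-neighbourhood of $K(p)=J(p)$; by semihyperbolicity, $p^{-1}$ has uniformly bounded local degree on disks of radius $\varepsilon$ centred in $J(p)$, and standard Carleson--Jones--Yoccoz estimates give that the logarithmic area of $\{w\in\Delta\colon f(w)\in U,\ p^k(f(w))\in U\text{ for }0\le k\le m\}$ decays geometrically in $m$ — this is where one must be careful, translating the distortion/expansion statements of \cite{Carleson1994} into an area bound and checking that pulling back by the \emph{entire} map $f$ (of finite order) does not destroy the summability; finite logarithmic area of the ``$|f|$ small or $|zf'/f|$ small'' set for $f$ restricted to a compact parameter annulus is automatic, but the uniformity over all scales $\lambda^n$ must be tracked. (3) Combine: a point $z\in\Delta$ with $|z|\asymp |\lambda|^n$ fails \eqref{goal5} only if the orbit of $f(w)$ stays in $U$ for $\gtrsim n$ steps (otherwise it has entered $\{|\zeta|\ge R_0\}$ and escaped, giving $|f(z)|\ge\exp(c d^n)\ge \exp(c'|z|^{\rho(f)})$); sum the geometric bounds over $n$ to conclude the total logarithmic area of the exceptional set is finite. (4) Invoke Theorem~\ref{t4} to obtain $\logarea(\Delta\setminus A(f))<\infty$, and use $A(f)\subset J(f)$ for $f\in\B$ to upgrade this to \eqref{4b6}. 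The main obstacle is step (2): making the semihyperbolic expansion quantitative enough, and uniform across all the dyadic-in-$\lambda$ scales, to yield an honest geometric decay of logarithmic areas rather than merely of Euclidean ones; the renormalisation $f(\lambda z)=p(f(z))$ is exactly the tool that converts ``self-similar under $\zeta\mapsto p(\zeta)$'' into ``geometrically decaying in the scale parameter $n$'', so the proof should hinge on carrying out that change of variables cleanly.
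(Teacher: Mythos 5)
Your outline for the case of connected $J(p)$ is essentially the paper's: write $z=\lambda^n w$ with $w$ in a fixed fundamental annulus where $f$ is univalent, use $f(\lambda^n w)=p^n(f(w))$, observe that $|f(z)|$ can fail to be $\exp\bigl(c|z|^{(1-\varepsilon)\rho(f)}\bigr)$ only if the orbit of $f(w)$ stays in $\{|\zeta|\le R\}$ for about $\varepsilon n$ steps, control the area of that set geometrically in $n$ (the paper quotes Peters--Smit, Lemma~\ref{la3}, rather than re-deriving it from Carleson--Jones--Yoccoz as you propose, but that is the same mechanism), pull back, rescale by $\lambda^n$, convert to logarithmic area and sum. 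That part of your plan is sound.

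The genuine gap is your opening claim that semihyperbolicity plus absence of attracting cycles forces every critical orbit of $p$ to be bounded, hence $f\in\B$, hence that Theorem~\ref{t4} (or the inclusion $A(f)\subset I(f)\subset J(f)$) is available. This is false: the hypotheses give $J(p)=K(p)$, but a critical point need not lie in $K(p)$ at all. Take $p(z)=z^2+c$ with $c$ outside the Mandelbrot set: $p$ is semihyperbolic, has no attracting cycles, $J(p)=K(p)$ is a Cantor set, the critical orbit escapes, $J(p)$ is disconnected and $f\notin\B$. In that situation Proposition~\ref{la2} is unavailable, so the single estimate \eqref{4b3} on $|f|$ does not control $|zf'(z)/f(z)|$, and Theorem~\ref{t4} cannot be invoked. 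This is exactly why the paper splits the proof: for disconnected $J(p)$ it must verify the full hypothesis \eqref{4b2} of Theorem~\ref{t2}, which requires a separate lower bound for $|zf'(z)/f(z)|$ off a set of finite logarithmic area. That bound occupies most of the paper's argument: an estimate of $|(p^k)'|$ on $V_{k-1}\backslash V_k$ via Lemma~\ref{la4} and Koebe's theorem, an area bound for the set where $(p^k)'$ is small near the critical points, and an estimate of $|(p^{n-k})'(w)/p^{n-k}(w)|$ for $|w|>R$ via the Green function, all transported through the functional equation as in \eqref{6u}--\eqref{6w}. Moreover, once $f\notin\B$ you also lose the automatic inclusion of $A(f)$ in $J(f)$, so to get the $J(f)$ part of \eqref{4b6} you must rule out multiply connected Fatou components; the paper does this with Zheng's lemma (Lemma~\ref{la4a}), using that $f(\lambda^n u_0)=p^n(f(u_0))\in J(p)$ stays bounded along the sequence $\lambda^n u_0$. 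None of this is addressed in your proposal, so as written it only proves the theorem under the additional hypothesis that $J(p)$ is connected.
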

The \emph{filled Julia set} $K(p)$ of a polynomial $p$ is defined by
\begin{equation}\label{1f}
K(p)=\{z\colon p^n(z)\not\to\infty\}.
\end{equation}
We always have $J(p)\subset K(p)$.
Semihyperbolic polynomials have no para\-bolic points and no Siegel disks.
The hypothesis in Theorem~\ref{t5} that $p$ has no attracting periodic points is thus equivalent 
to $J(p)=K(p)$.
The following result shows that if $J(p)$ is connected, then this hypothesis is also necessary.
\begin{theorem}\label{t6}
Let $p$ be a polynomial with connected Julia set and and let $f$ be a Poincar\'e function of~$p$.
If $\area K(p)>0$, then $\area I(f)=0$.
\end{theorem}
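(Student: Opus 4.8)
The plan is to combine three ingredients: membership of $f$ in the Eremenko--Lyubich class, a scaling symmetry of $f^{-1}(K(p))$ coming from Schr\"oder's equation, and a measure estimate carried out in logarithmic coordinates. First I would record that, since $J(p)$ is connected, the forward orbit of every critical point of $p$ is bounded, so $f\in\B$; hence $I(f)\subset J(f)$ by the Eremenko--Lyubich theorem quoted in the introduction, and in particular $I(f)$ meets no component of $F(f)$. Next, exploit $f(\lambda z)=p(f(z))$: since $K(p)$ is completely invariant under $p$, one has $z\in f^{-1}(K(p))$ if and only if $f(\lambda z)=p(f(z))\in K(p)$, i.e.\ $\lambda z\in f^{-1}(K(p))$, so $f^{-1}(K(p))$ is invariant under $z\mapsto\lambda z$. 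As $f$ is non-constant and $\area K(p)>0$ we get $\area f^{-1}(K(p))>0$, and combining this with the scaling invariance shows that $f^{-1}(K(p))$ occupies a fixed proportion $\theta\in(0,1]$ of each annulus $\{|\lambda|^k\le |z|<|\lambda|^{k+1}\}$; in particular $\logarea(\Delta\cap f^{-1}(K(p)))=\infty$.

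The second step is a reduction to a ``survivor set''. Put $E=\{z\in\C: f^n(z)\notin K(p)\text{ for all }n\ge 0\}$. Since $K(p)$ is compact and $f^n(z)\to\infty$ for $z\in I(f)$, there is $N$ with $f^n(z)\notin K(p)$ for $n\ge N$, hence $f^N(z)\in E$; thus $I(f)\subset\bigcup_{N\ge 0}f^{-N}(E)$. Because $f^{-N}$ maps sets of measure zero to sets of measure zero, it suffices to prove $\area E=0$ — equivalently, that the $f$-orbit of almost every point meets $K(p)$. (If it is necessary to rule out the possibility that $E$ meets a Fatou component, one uses $I(f)\subset J(f)$ to replace $E$ by $E\cap J(f)$; since $\{f^n\}$ is not normal at any point of $J(f)$, Montel's theorem forces $E\cap J(f)$ to lie on the boundary of $\bigcup_n f^{-n}(K(p))$, so the same conclusion is what one needs.)

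The heart of the argument, and the step I expect to be the main obstacle, is proving $\area E=0$. Here I would use the B\"ottcher coordinate $\phi$ of $p$, a conformal map of $\C\setminus K(p)$ onto $\{|w|>1\}$ with $\phi\circ p=(\cdot)^d\circ\phi$, and set $g=\phi\circ f$ on $f^{-1}(\C\setminus K(p))$; Schr\"oder's equation then gives $g(\lambda z)=g(z)^d$. Passing to logarithmic coordinates $z=e^\zeta$, $w=e^\xi$ turns $g$ into a map of exponential type, of the form $\zeta\mapsto q(\zeta)e^{\rho\zeta}$ with $\rho=\rho(f)=\log d/\log|\lambda|$ and $q$ periodic, and turns $E$ into a forward-invariant set which, by the first step, must avoid a subset filling a fixed proportion of each fundamental strip. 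For such a configuration the survivor set has two-dimensional measure zero: this should follow from the change-of-variables identity relating $\area(E\cap Q)$, for a fundamental domain $Q$ of $z\mapsto\lambda z$, to the logarithmic area of its image under $g$, which — using that $f^{-1}(K(p))$ fills proportion $\theta$ of every annulus — forces $\area(E\cap Q)=0$ by a Borel--Cantelli-type estimate along the orbit. This generalizes McMullen's computation that $\area I(\lambda e^z)=0$ (the case $p(z)=z^2/\lambda$), and it is where all the real work lies; granting it, $\area E=0$, hence $\area I(f)=0$, and the proof is complete.
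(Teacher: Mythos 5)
Your opening step is exactly the paper's: since $J(p)$ is connected, $f\in\B$, and since $K(p)$ is completely invariant under $p$, Schr\"oder's equation makes $L:=f^{-1}(K(p))$ invariant under $z\mapsto\lambda z$; as $\area L>0$, the set $L$ fills a fixed proportion of every annulus $\{|\lambda|^{n-1}\leq|z|\leq|\lambda|^{n}\}$, so that $\logarea\bigl(L\cap D(0,r)\cap\Delta\bigr)$ grows at least like a positive constant times $\log r$. At this point the paper is essentially done: choosing $R$ with $K(p)\subset D(0,R)$, so that $L\subset f^{-1}(D(0,R))$, it invokes the Eremenko--Lyubich result (Lemma~\ref{la5}, i.e.\ Theorem~7 of \cite{Eremenko1992}): if $f\in\B$ and $\liminf_{r\to\infty}\logarea\bigl(f^{-1}(D(0,R))\cap D(0,r)\cap\Delta\bigr)/\log r>0$, then $\area I(f)=0$. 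You instead try to prove this ``almost no escaping points'' statement from scratch, and that is where the genuine gap lies: the step you yourself call the heart of the argument --- that the survivor set has area zero, via B\"ottcher/logarithmic coordinates and a ``Borel--Cantelli-type estimate'' --- is only asserted (``should follow'', ``granting it''). It is essentially the content of the cited Eremenko--Lyubich theorem, whose proof needs real work (a logarithmic change of variable together with a density-point/distortion argument), not a routine change of variables, so as written the proposal does not prove the theorem.

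Two details of the unproved part are also shaky. First, reducing to $\area E=0$ with $E=\{z\colon f^n(z)\notin K(p)\text{ for all }n\geq0\}$ demands more than is needed and may be aiming at a false target: nothing in your argument rules out, say, an attracting cycle of $f$ whose basin contains a positive-measure set of points whose whole orbits avoid $K(p)$, which would give $\area E>0$ while leaving $\area I(f)=0$ untouched. Your parenthetical repair (intersect with $J(f)$, using $I(f)\subset J(f)$ for $f\in\B$) fixes the target, but the Montel observation that $E\cap J(f)\subset\partial\bigl(\bigcup_n f^{-n}(K(p))\bigr)$ does not yield measure zero, since such boundaries can have positive area --- indeed $J(p)=\partial K(p)$ itself can have positive area by Buff--Ch\'eritat. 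Second, the claimed normal form $\zeta\mapsto q(\zeta)e^{\rho\zeta}$ with $q$ periodic for $g=\phi\circ f$ in logarithmic coordinates is not justified, and it is not needed once one quotes the Eremenko--Lyubich theorem: with that citation, your first paragraph already is the paper's proof.
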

Buff and Ch\'eritat~\cite{Buff2012} have shown that there exist polynomials $p$
 with Julia sets of positive measure.
These polynomials $p$ may be chosen to satisfy $J(p)=K(p)$. Theorem~\ref{t6} thus also shows
that the hypothesis that $p$ be semihyperbolic cannot be omitted in Theorem~\ref{t5}.

Theorem~\ref{t6} is a simple consequence of a result of 
Eremenko and Lyubich~\cite[Theorem~7]{Eremenko1992}, using the fact noted above that
$f\in\B$ if $J(p)$ is connected.
This fact also simplifies the proof of Theorem~\ref{t5} considerably if $J(p)$ is connected.
We will thus deal with this special case first and afterwards
provide the additional arguments that have to be made in the general case.

As a second example where our results apply we consider the Weierstra{\ss} $\sigma$-function.
We recall the definition, using the terminology as in~\cite{Ahlfors1953,Hurwitz1964}.
For $\omega_1,\omega_2\in\C\backslash\{0\}$ with $\omega_2/\omega_1\notin\R$
we consider the lattice 
\begin{equation}\label{8a1}
\Omega=\{m\omega_1+n\omega_2\colon m,n\in\Z\}.
\end{equation}
Then
\begin{equation}\label{8a}
\sigma(z)=\sigma(z|\omega_1,\omega_2):=z\prod_{w\in \Omega\backslash\{0\}}
\left(1-\frac{z}{w}\right)\exp\!\left(\frac{z}{w}+\frac12\left(\frac{z}{w}\right)^2\right).
\end{equation}
The Weierstra{\ss} $\zeta$-function and $\wp$-function are defined by
\begin{equation}\label{8b}
\zeta(z)=\frac{\sigma'(z)}{\sigma(z)} 
\quad\text{and}\quad
\wp(z)=-\zeta'(z).
\end{equation}
Moreover, $\eta_1:=2\zeta(\omega_1/2)$.

It can be assumed without loss of generality that $\tau:=\omega_2/\omega_1$ satisfies $\im\tau>0$.
Since $\sigma(c z|c \omega_1,c \omega_2)=c\, \sigma(z|\omega_1,\omega_2)$ for
every $c\in\C\backslash\{0\}$ it suffices to consider the case that $\omega_1=1$
and thus $\tau=\omega_2$.

The Nevanlinna deficiency $\delta(0,\sigma)$ was studied by Gol'dberg~\cite{Goldberg1966}
and Korenkov~\cite{Korenkov1976}; see~\cite{Goldberg2008,Hayman1964} as a
reference for Nevanlinna theory.
The result of the latter paper says that $\delta(0,\sigma)=0$ if and only if
\begin{equation}\label{8c}
\re\!\left(\frac{1}{\eta_1}\right) \geq \frac{\im \tau}{2\pi}.
\end{equation}
We note that the terminology used in~\cite{Goldberg1966,Korenkov1976} is different,
with $\eta_1=\zeta(1/2)$, but
we have converted the result to the terminology of~\cite{Ahlfors1953,Hurwitz1964} introduced above.

The set of all $\tau$ satisfying~\eqref{8c} is shown in Figure~\ref{fig1}.
Since~\cite[p.~8]{Weierstrass1893}
\begin{equation}\label{8d}
\eta_1= 
\pi^2\left(\frac13-2 \sum_{n=1}^\infty \frac{1}{\sin^2 n\pi\tau}\right)
=\frac{\pi^2}{3}\left(1-24 e^{-2\pi\tau}+\O(e^{-4\pi\im\tau})\right)
\end{equation}
as $\im\tau\to \infty$
the upper boundary of this set is very close (but not equal) to
the line given by $\im\tau=6/\pi$.
\begin{figure}[htb]
\begin{center}
\begin{overpic}[width=0.7\textwidth]{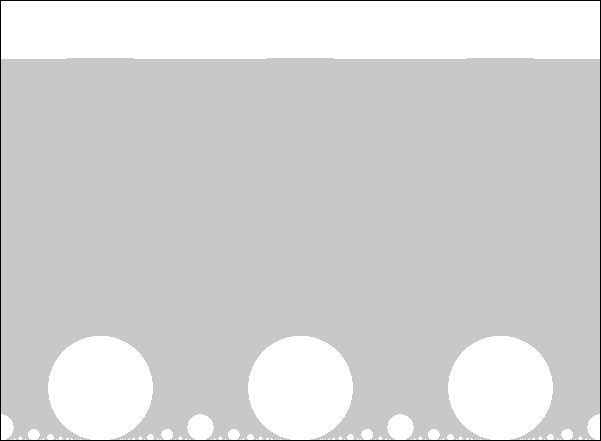}
 \put (50,0) {\line(0,1){73}}
 \put (83.33,0) {\line(0,1){2}}
 \put (82.33,3) {$1$}
 \put (16.67,0) {\line(0,1){2}}
 \put (14.67,3) {$-1$}
 \put (48.5,33.33) {\line(1,0){3}}
 \put (48.5,66.67) {\line(1,0){3}}
 \put (52,32.33) {$1$}
 \put (52,65.67) {$2$}
\end{overpic}
\caption{The set of $\tau$ satisfying \eqref{8c}.}
\label{fig1}
\end{center}
\end{figure}
Consequently, the other boundary components, which are
images of the upper boundary under the modular group, are close to circles.

\begin{theorem}\label{t7}
Suppose that~\eqref{8c} holds.
Then $\area (J(\sigma)\cap A(\sigma))>0$.
\end{theorem}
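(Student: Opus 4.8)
The plan is to apply Theorem~\ref{t2} with $f=\sigma$. Recall that $\sigma$ has order $\rho(\sigma)=2$ and admits the classical representation $\sigma(z)=h(z)\exp(\tfrac12\eta_1z^2)$, where $h$ is, up to a constant factor, the Jacobi theta function attached to the lattice: an entire function with simple zeros exactly on $\Omega$ and satisfying $|h(z+1)|=|h(z)|$ and $|h(z+\tau)|=|h(z)|\exp(2\pi\im z+\pi\im\tau)$. Since $\sigma'/\sigma=\zeta$, this yields, with the help of the Legendre relation,
\begin{equation}\label{p1}
\log|\sigma(z)|=Q(z)+v(z),\qquad \zeta(z)=L(z)+g(z),
\end{equation}
where $Q(z)=\re(\tfrac12\eta_1z^2)+\frac{\pi}{\im\tau}(\im z)^2$ is a real quadratic form, $L(z)=(\eta_1-\frac{\pi}{\im\tau})z+\frac{\pi}{\im\tau}\overline z$ is $\R$-linear, $v$ is doubly periodic with $v\le C$ and $v(z)=\log|z-w|+O(1)$ as $z\to w\in\Omega$, and $g$ is doubly periodic, bounded away from $\Omega$, with a simple pole of residue $1$ at each point of $\Omega$; all error terms are uniform because $v$ and $g$ are periodic. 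One also checks that $\nabla Q=\overline L$, so $\ker Q=\ker L$.

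A short computation gives $\det L=|\eta_1|^2-\frac{2\pi}{\im\tau}\re\eta_1$; since the matrix of $Q$ has trace $\frac{\pi}{\im\tau}>0$ and determinant $-\tfrac14\det L$, hypothesis~\eqref{8c} is equivalent to $\det L\le0$ and to $Q$ being positive semidefinite. So, assuming~\eqref{8c}, we have $Q\ge0$ on $\C$; if~\eqref{8c} is strict then $Q$ is positive definite and $L$ is an $\R$-linear homeomorphism, while if equality holds then $Q$ has rank $1$, $Q(z)=\frac{\pi}{\im\tau}\dist(z,\ell_0)^2$ for the line $\ell_0=\ker Q=\ker L$ through the origin, and $|L(z)|\asymp\dist(z,\ell_0)$.

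I would then verify~\eqref{4b2}, with $\rho(\sigma)/2=1$, for a sufficiently small $\varepsilon\in(0,1)$. Decompose $\Delta$ into dyadic annuli $A_k=\{z:2^k\le|z|<2^{k+1}\}$. Off a $\tfrac12$-neighborhood of $\Omega$ we have $v,g=O(1)$, so by~\eqref{p1} and $Q\ge0$ the part of the exceptional set in~\eqref{4b2} meeting $A_k$ and lying outside that neighborhood is contained in $\{z\in A_k:|L(z)|<|z|^\varepsilon+C\}\cup\{z\in A_k:Q(z)<|z|^\varepsilon+C\}$. If~\eqref{8c} is strict this is empty for $k$ large, since there $|L(z)|\gtrsim|z|$ and $Q(z)\gtrsim|z|^2$ on $A_k$; if equality holds it lies in a neighborhood of $\ell_0$ of width $O(|z|^\varepsilon)$ and hence of logarithmic area $O(2^{k(\varepsilon-1)})$, which is summable in $k$ because $\varepsilon<1$. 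The part within distance $\tfrac12$ of some $w\in\Omega$ is controlled by the uniform expansions $\log|\sigma(z)|=Q(w)+\log|z-w|+O(1)$ and $\zeta(z)=\frac1{z-w}+L(w)+O(1)$ near $w$: from these it lies in a disk about $w$ of some radius $\rho_w\le1$ that is exponentially small when $Q(w)$ is large and at most $O(|z|^\varepsilon/(1+|L(w)|)^2)$ when $|L(w)|$ is large, so $A_k$ contributes at most $C\sum_{w\in\Omega,\,|w|\asymp2^k}\rho_w^2/|w|^2$ to the logarithmic area. When~\eqref{8c} is strict, $Q(w)\asymp|w|^2$ and $|L(w)|\asymp|w|$ give $\rho_w\le2^{-ck}$, and the sum is summable in $k$. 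When equality holds the same bound applies except for the lattice points within bounded distance of $\ell_0$, where only $\rho_w=O(1)$ is available; but such points of modulus $\asymp2^k$ number only $O(2^k)$ (lattice points in a bounded-width strip), contributing $O(2^k\cdot2^{-2k})$, again summable. Thus~\eqref{4b2} holds.

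By Theorem~\ref{t2} we conclude $\logarea(\Delta\setminus A(\sigma))<\infty$, and in particular $\area A(\sigma)>0$. To upgrade this to $\area(J(\sigma)\cap A(\sigma))>0$ one must, by the addendum in Theorem~\ref{t2}, rule out multiply connected components of $F(\sigma)$. When~\eqref{8c} holds with equality this is immediate from Baker's criterion, since $\log|\sigma|=v=O(1)$ along $\ell_0$ away from the lattice, so $\sigma$ is bounded on a curve tending to $\infty$. I expect the main obstacle to be the case of strict inequality, in which $\sigma$ grows superpolynomially along every curve to $\infty$ and multiply connected Fatou components must be excluded by other means; the other delicate point is the bookkeeping near the lattice when~\eqref{8c} is an equality, carried out above. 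Once $F(\sigma)$ has no multiply connected component, Theorem~\ref{t2} gives $\logarea(\Delta\setminus(A(\sigma)\cap J(\sigma)))<\infty$, and therefore $\area(J(\sigma)\cap A(\sigma))>0$.
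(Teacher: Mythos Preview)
Your verification of~\eqref{4b2} is essentially correct, though organized differently from the paper. The paper quotes asymptotics for $\log|\sigma|$ and $\zeta$ (Lemma~\ref{la6}) valid outside tiny disks around the lattice points, writes $V(re^{i\theta})=\tfrac12(B+A\cos(\alpha+2\theta))r^2$ with $A\le B$ (which is exactly~\eqref{8c}), and then excises an angular sector $G=\{re^{i\theta}:|\theta-\theta^\pm|\le r^{-1/4}\}$ around the two ``bad'' directions; outside $E\cup F\cup G$ one gets $\log|\sigma(z)|\ge c_1|z|^{3/2}$ and $|z\zeta(z)|\ge c_2|z|^{3/2}$ in one stroke, without splitting into the strict and equality cases. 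Your theta-function decomposition $\log|\sigma|=Q+v$, $\zeta=L+g$ leads to the same conclusion via a case analysis and a dyadic count; this is a legitimate alternative, and your bookkeeping near the lattice in the equality case is fine.

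The genuine gap is the one you flag yourself: excluding multiply connected Fatou components when~\eqref{8c} is strict. Baker's criterion is unavailable there, but the paper's device works uniformly in both cases and is very short. One invokes Zheng's result (Lemma~\ref{la4a}): if $\sigma$ had a multiply connected wandering domain $U$, then for large $n$ the iterate $\sigma^n(U)$ would contain an annulus $\{r_n\le|z|\le R_n\}$ with $r_n\to\infty$ and $R_n/r_n\to\infty$, and moreover $\sigma^{n+1}(U)\subset\{|z|\ge R_n\}$. But any such annulus eventually contains a lattice point $w\in\Omega$ (for instance some $m\omega_1$), and $\sigma(w)=0$; hence $0\in\sigma^{n+1}(U)$, contradicting $R_n\to\infty$. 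This argument uses nothing about~\eqref{8c}; it relies only on the zeros of $\sigma$ being a lattice. With this in hand, Theorem~\ref{t2} gives $\logarea(\Delta\setminus(A(\sigma)\cap J(\sigma)))<\infty$, completing the proof.
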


The results in~\cite{Goldberg1966,Korenkov1976} actually show that if~\eqref{8c} is not satisfied, 
then $\sigma(z)$ tends to $0$ as $z\to\infty$ in some sector. This suggests that $\area I(\sigma)=0$
in this case.

\section{Proofs of Theorems \ref{t1}--\ref{t4}} \label{proofs}
\begin{proof}[Proof of Theorem~\ref{t3}]
Put
\begin{equation}\label{1b}
X=\left\{ z\colon \left|\frac{zf'(z)}{f(z)}\right| \geq n(|z|)^{1/2+\varepsilon}\right\} ,
\quad 
Y=\left\{ z\colon \left|f(z)\right| \geq (1+\varepsilon)|z| \right\},
\end{equation}
and
$W=\Delta\backslash (X\cap Y)$.
Then~\eqref{4b1} takes the form
\begin{equation}\label{1b1}
\logarea W<\infty.
\end{equation}

Note that points in $\C\backslash\{0\}$ which stay in $X\cap Y$ under
iteration of $f$ are contained in $I(f)$.
In order to study the set of such points we consider, for $k\in\N$, the sets
\begin{equation}\label{2a}
A_k =\{z\colon 2^{k}\leq n(|z|)< 2^{k+1} \}.
\end{equation}
For a measurable subset $P$ of $\C$ we then find that
\begin{equation}\label{2b}
\begin{aligned}
 \logarea\!\left(P\cap X\cap A_k\right)
& = 
\int_{P\cap X\cap A_k} \frac{dx\,dy}{|z|^2} 
\\ &
\leq \int_{P\cap X\cap A_k} \frac{ |f'(z)|^2}{n(|z|)^{1+2\varepsilon}|f(z)|^2} dx\,dy
\\ &
\leq 
\frac{1}{2^{(1+2\varepsilon)k}}
\int_{P\cap X\cap A_k} \frac{ |f'(z)|^2}{|f(z)|^2} dx\,dy,
\end{aligned}
\end{equation}
provided the integral on the right hand side exists.
Taking $P=f^{-1}(S)$ for a subset $S$ of $\C$ of finite logarithmic area we find that
\begin{equation}\label{2b1}
\begin{aligned}
 \logarea\!\left(f^{-1}(S)\cap X\cap A_k\right)
& 
\leq \frac{1}{2^{(1+2\varepsilon)k}}
\int_{S} \card\!\left( f^{-1}(w)\cap X\cap A_k\right) \frac{du\,dv}{|w|^2}
\\ &
\leq \frac{2^{k+1}}{2^{(1+2\varepsilon)k}} \logarea S 
= \frac{2}{2^{2\varepsilon k}} \logarea S .
\end{aligned}
\end{equation}
Let $R>1$ be large and 
choose $K\in \N$  such that $2^K\leq n(R)$. We deduce that
\begin{equation}\label{2c}
\begin{aligned}
\logarea\!\left(f^{-1}(S)\cap X\cap \{z\colon |z|\geq R\}\right)
&\leq \sum_{k=K}^\infty 
\logarea\!\left(f^{-1}(S)\cap X\cap A_k\right)
\\ &
\leq 2 \logarea S \sum_{k=K}^\infty \frac{1}{2^{2\varepsilon k}}
\\ &
= \frac{2^{1-2\varepsilon K}}{1- 2^{-2\varepsilon}} \logarea S
\end{aligned}
\end{equation}
and thus
\begin{equation}\label{2d}
\logarea\!\left(f^{-1}(S)\cap X\cap \{z\colon |z|\geq R\}\right)
\leq\frac12 \logarea S 
\end{equation}
if $K$ is sufficiently large, which can be achieved by choosing $R$ large.

Put $S_0=W\cup\{z\colon |z|<R\}$ and $S_k=f^{-1}(S_{k-1})\cap X\cap \{z\colon |z|\geq R\}$
for $k\geq 1$.
Then 
\begin{equation}\label{2d1}
\logarea S_k \leq\frac12 \logarea S_{k-1} \end{equation}
for $k\geq 2$ by~\eqref{2d} and for large $R$ we also have 
\begin{equation}\label{2d2}
\logarea S_1 \leq\frac12 \logarea (S_{0}\cap\Delta).
\end{equation}
Note that 
\begin{equation}\label{2d3}
\begin{aligned}
\logarea (S_{0}\cap\Delta)
&\leq 
\logarea W +\logarea (D(0,R)\cap\Delta)
\\ &
= \logarea W +\log R
<\infty
\end{aligned}
\end{equation}
by~\eqref{1b1}.
It follows from~\eqref{2d1} and~\eqref{2d2} that 
\begin{equation}\label{2f}
\logarea\!\left(\bigcup_{k=0}^\infty S_k\cap\Delta \right)
\leq 2\logarea (S_0\cap\Delta)< \infty.
\end{equation}
Let now
\begin{equation}\label{2g}
T=\left\{z\colon f^k(z)\in X\cap Y \text{ and } \left|f^k(z)\right|\geq R
\text{ for all } k\geq 0 \right\}.
\end{equation}
Here, as usual, $f^0(z)=z$ so that if $z\in T$, then in particular $z\in X\cap Y$ and $|z|\geq R$.

Suppose that $z\in \C\backslash T$. Then there exists $k\geq 0$ such that
$f^k(z)\notin X\cap Y$ or $|f^k(z)|<R$. Thus $f^k(z)\in S_0$.
Assuming $k$ to be minimal we have $f^j(z)\in X\cap Y$ and $|f^j(z)|\geq R$ for $0\leq j\leq k-1$.
We conclude that $f^{k-1}(z)\in S_{1}$.  Inductively we see that $f^{k-j}(z)\in S_j$.
In particular, $z\in S_k$. This implies that 
\begin{equation}\label{2h}
\C\backslash T\subset \bigcup_{k=0}^\infty S_k.
\end{equation}
On the other hand, for $z\in T$ and $k\geq 0$ we have 
\begin{equation}\label{2h1}
|f^k(z)|\geq (1+\varepsilon)^k |z| \geq (1+\varepsilon)^k R,
\end{equation}
by the definition of $T$ and~$Y$.
Hence
$T\subset I(f)$ and thus $\C\backslash I(f)\subset \C\backslash T$.
Together with~\eqref{2f} and~\eqref{2h} this yields~\eqref{4c}.

To prove the second claim we only have to show that if $T\cap F(f)\neq\emptyset$,
then $F(f)$ has a multiply connected component.
Our arguments for this are similar to those in~\cite[Theorem~3.1]{Sixsmith2015}.

So let $z\in T\cap F(f)$. Choose $\delta>0$ such that 
$D(z,\delta)\subset F(f)$. Since $f^n(z)\to\infty$ as $n\to\infty$ we may
assume that $f^n(\zeta)\neq 0$ for all $\zeta\in D(z,\delta)$ and $n\in\N$.
We consider the functions $g_n\colon D(z,\delta)\to\C$, $g_n(\zeta)=f^n(\zeta)/f^n(z)$.
Then $g_n(z)=1$ and, since $f^j(z)\in X$ for $0\leq j\leq n-1$, we have
\begin{equation}\label{2i}
\begin{aligned}
|g_n'(z)|
&=\left|\frac{(f^n)'(z)}{f^n(z)}\right|
=\frac{1}{|f^n(z)|}\prod_{j=0}^{n-1}|f'(f^j(z))|
\\ &
\geq \frac{1}{|f^n(z)|}\prod_{j=0}^{n-1}\frac{n(|f^j(z)|)^{1/2+\varepsilon}|f(f^j(z))|}{|f^j(z)|}
= \frac{1}{|z|}\prod_{j=0}^{n-1}n(|f^j(z)|)^{1/2+\varepsilon}.
\end{aligned}
\end{equation}
Thus $|g_n'(z)|\to\infty$ as $n\to\infty$. Hence the $g_n$ do not form a normal family. 
It now follows easily from Montel's theorem that there exist arbitrarily large $n$
such that $\partial D(0,1)\subset g_n(D(z,\delta))$ or  $\partial D(0,2)\subset g_n(D(z,\delta))$.
In fact, this holds for all large~$n$.
Thus we have $\partial D(0,|f^n(z)|)\subset f^n(D(z,\delta))\subset F(f)$ or 
$\partial D(0,2|f^n(z)|)\subset f^n(D(z,\delta))\subset F(f)$.
 Since $f^n(z)\to\infty$ this implies that $F(f)$ has a multiply connected component.
\end{proof}
\begin{proof}[Proof of Theorem~\ref{t1}]
Upper bounds for $n(r)$ have been given by Hayman and Stewart~\cite[Theorem~5]{Hayman1954},
and we follow the reasoning there. Nevanlinna's first fundamental theorem implies
that there exists a constant $C$ such that 
\begin{equation}\label{3c}
\int_1^r \frac{n(t,a)}{t} dt \leq T(r,f) +C
\end{equation}
for all $a\in\C$ and $r>1$, with the Nevanlinna (or Ahlfors-Shimizu) characteristic $T(r,f)$. Thus
\begin{equation}\label{3d}
n(r,a)=n(r,a) \int_r^{er} \frac{dt}{t} \leq  \int_r^{er} \frac{n(t,a)}{t}dt \leq T(er,f)+C
\leq \log M(er,f)+C
\end{equation}
for all $a\in\C$ and $r>1$.
Given $\delta>0$ we thus have 
\begin{equation}\label{3d1}
n(r)\leq \log M(er,f)+C \leq r^{\rho(f)+\delta}
\end{equation}
for large~$r$.
And for a given $\varepsilon>0$ we may choose $\delta\in (0,\varepsilon]$ such that
\[
n(r)^{1/2+\delta} \leq r^{(\rho(f)+\delta)(1/2+\delta)}\leq r^{\rho(f)/2+\varepsilon}.
\]
We thus deduce from~\eqref{4b} that~\eqref{4b1} holds with $\varepsilon$ replaced by $\delta$.
The conclusion now follows from Theorem~\ref{t3}.
\end{proof}

In order to prove Theorem~\ref{t2} we consider, for $\alpha>0$, the function
\begin{equation}\label{3e}
E_\alpha\colon [0,\infty)\to [0,\infty), \quad E_\alpha(x)=\exp(x^\alpha), 
\end{equation}
and note that there exists $x_\alpha\geq 0$ such that $E_\alpha(x)>x$ for $x>x_\alpha$
and thus $E_\alpha^k(x)\to\infty$ as $k\to\infty$ if $x>x_\alpha$.
We shall use the following lemma which can be deduced from the arguments
in~\cite[Proof of Lemma~3.7]{Cui}, but for completeness we include the proof, following
the reasoning in~\cite{Cui}.
\begin{lemma}\label{la1}
Let $\beta>\alpha>0$. Then there exists $x_0>0$ such that
\begin{equation}\label{3e1}
E_\alpha^{k}(x) \geq E_\beta^{k-2}(x)
\end{equation}
for $k\geq 4$ and $x\geq x_0$.
\end{lemma}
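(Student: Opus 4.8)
The plan is to prove, by induction on $k$, a strengthened version of \eqref{3e1} that carries an extra power; a direct induction on \eqref{3e1} itself cannot work, because $\beta>\alpha$ forces $E_\alpha(y)<E_\beta(y)$ for $y>1$, so the two orbits cannot be matched one iterate at a time. To get around this I would fix $s=1+\beta/\alpha$, so that $s>1$ and $s\alpha=\alpha+\beta>\beta$, and aim to prove that
\[
E_\alpha^{k}(x)\ \ge\ \bigl(E_\beta^{k-2}(x)\bigr)^{s}\qquad\text{for all }k\ge 2\text{ and }x\ge x_0,
\]
where $x_0$ is chosen below. Since $E_\beta^{k-2}(x)\ge 1$ and $s>1$, this at once yields $E_\alpha^{k}(x)\ge E_\beta^{k-2}(x)$, hence \eqref{3e1}, for all $k\ge 2$, in particular for $k\ge 4$.

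First I would fix $x_0\ge 1$ so large that: (a) $E_\beta(y)\ge y$ for $y\ge x_0$, which forces $E_\beta^{j}(x)\ge x\ge x_0$ for every $j\ge 0$ and every $x\ge x_0$; (b) $x_0^{\alpha}\ge s$; and (c) $E_\alpha^{2}(x)\ge x^{s}$ for $x\ge x_0$. Item (c) is no trouble, since $E_\alpha^{2}(x)=\exp(\exp(\alpha x^{\alpha}))\ge\exp(\alpha x^{\alpha})$ and $\alpha x^{\alpha}\ge s\log x$ once $x$ is large.

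The base case $k=2$ is exactly (c). For the inductive step, assuming the bound for some $k\ge 2$, I would set $B=E_\beta^{k-2}(x)$ (so $B\ge x_0$ by (a)) and use the inductive hypothesis, the monotonicity of $E_\alpha$ on $[0,\infty)$, and $s\alpha=\alpha+\beta$ to obtain
\[
E_\alpha^{k+1}(x)=E_\alpha\bigl(E_\alpha^{k}(x)\bigr)\ \ge\ E_\alpha\bigl(B^{s}\bigr)=\exp\bigl(B^{\alpha+\beta}\bigr)=\exp\bigl(B^{\alpha}B^{\beta}\bigr),
\]
while $\bigl(E_\beta^{k-1}(x)\bigr)^{s}=\bigl(E_\beta(B)\bigr)^{s}=\exp\bigl(sB^{\beta}\bigr)$. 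Since $B^{\alpha}\ge x_0^{\alpha}\ge s$ by (b), we have $B^{\alpha}B^{\beta}\ge sB^{\beta}$, so $E_\alpha^{k+1}(x)\ge\bigl(E_\beta^{k-1}(x)\bigr)^{s}$, which closes the induction.

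The only real obstacle is the structural one noted at the start: because $E_\beta$ dominates $E_\alpha$ pointwise, the loss of two iterates in \eqref{3e1} is genuinely necessary and must be expended in building the surplus power $s$ (one could in fact get by with any $s$ satisfying $s\alpha>\beta$). Everything else is elementary; the only point needing a little care is that a single threshold $x_0$ has to serve for all $k$ at once, and this is precisely what (a) secures — every $E_\beta$-orbit that starts above $x_0$ stays above $x_0$.
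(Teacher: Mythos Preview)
Your proof is correct and in fact yields the inequality for all $k\ge 2$, not just $k\ge 4$. The key device---strengthening the claim to $E_\alpha^{k}(x)\ge\bigl(E_\beta^{k-2}(x)\bigr)^{s}$ with $s$ chosen so that $s\alpha>\beta$---is exactly what is needed to make a direct induction go through, and your computation $E_\alpha(B^{s})=\exp\bigl(B^{s\alpha}\bigr)=\exp\bigl(B^{\alpha}B^{\beta}\bigr)\ge\exp\bigl(sB^{\beta}\bigr)=(E_\beta(B))^{s}$ is clean and correct.

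The paper takes a genuinely different route. It conjugates $E_\alpha$ by $\exp\circ\exp$, obtaining the auxiliary map $F_\alpha(x)=\alpha e^{x}$ via the identity $E_\alpha(\exp\exp x)=\exp\exp F_\alpha(x)$, and then compares $F_\alpha$ and $F_\beta$ by an additive shift: with $c=\log(2\beta/\alpha)$ one has $F_\alpha(x+c)\ge F_\beta(x)+c$, hence $F_\alpha^{k}(x+c)\ge F_\beta^{k}(x)+c$. Two extra iterates are then spent at the ends to enter and leave the $\exp\exp$ conjugacy, which is why the paper only obtains $k\ge 4$. Your approach avoids the conjugacy entirely and uses a multiplicative (power) surplus in place of the paper's additive surplus; this is more elementary, slightly sharper in the range of $k$, and the whole argument stays on one line. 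The paper's approach, on the other hand, has the conceptual virtue that after the change of variable the comparison $F_\alpha$ versus $F_\beta$ is between maps of the same shape $x\mapsto ce^{x}$, so the inequality $F_\alpha(x+c)\ge F_\beta(x)+c$ is immediate.
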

\begin{proof}
Let $F_\alpha(x)=\alpha e^{ x}$. Then
\begin{equation}\label{3e0}
E_\alpha(\exp\exp x)=\exp \exp F_\alpha(x)
\end{equation}
and thus
\begin{equation}\label{3e2}
E_\alpha^k(x)
=\exp \exp F_\alpha^{k}(\log\log x)
=\exp \exp F_\alpha^{k-2}(\alpha x^\alpha)
\end{equation}
for $k\geq 2$. Put $c=\log(2\beta/\alpha)$.  For large $x$ we have 
\begin{equation}\label{3e3}
F_\alpha(x+c)=\alpha e^{x+c} = \alpha e^c e^x= 2 F_\beta(x)\geq F_\beta(x)+c
\end{equation}
and thus
\begin{equation}\label{3e4}
F_\alpha^k(x+c)\geq F_\beta^k(x)+c.
\end{equation}
For large $x$ we also have
\begin{equation}\label{3e5}
F_\alpha(\alpha x^\alpha)\geq x+c
\quad\text{and}\quad
F_\beta(x)\geq \beta x^\beta .
\end{equation}
Combining~\eqref{3e2}, \eqref{3e4} and~\eqref{3e5} we obtain
\begin{equation}\label{3e6}
\begin{aligned}
E_\alpha^{k}(x)
&= \exp \exp(F_\alpha^{k-3}(F_\alpha(\alpha x^\alpha)))
\geq \exp \exp(F_\alpha^{k-3}(x+c))
\\ &
\geq \exp \exp(F_\beta^{k-3}(x)+c)
\geq \exp \exp(F_\beta^{k-3}(x))
\\ &
= \exp \exp(F_\beta^{k-4}(F_\beta(x)))
\geq \exp \exp(F_\beta^{k-4}(\beta x^\beta))
=E_\beta^{k-2}(x)
\end{aligned}
\end{equation}
for $k\geq 4$ and large~$x$.
\end{proof}

\begin{proof}[Proof of Theorem~\ref{t2}]
Let $E_\varepsilon(x)=\exp(x^\varepsilon)$ and, for some large $R>0$, let $B(f)$ be
the set of all $z\in\C$ such that
\begin{equation}\label{3f}
|f^k(z)|\geq E_\varepsilon^k(R)
\end{equation}
for all $k\geq 0$.
We proceed as in the proofs of Theorems~\ref{t3} and~\ref{t1}, with the definition of $Y$ changed
to 
\begin{equation}\label{3f1}
Y=\left\{ z\colon \left|f(z)\right| \geq E_\varepsilon(|z|) \right\},
\end{equation}
however.
Instead of~\eqref{2h1} we now obtain~\eqref{3f} for $z\in T$ and $k\geq 0$.
We deduce 
that~\eqref{4c}, and if $f$ has no multiply connected wandering domains also~\eqref{4d}, hold
with $I(f)$ replaced by $B(f)$.
Thus we only have to show that $B(f)\subset A(f)$.

In order to do so we use the hypothesis that $f$ has finite order. It yields that 
if $\mu>\rho(f)$ and $R$ is sufficiently large, then
$|f(z)|\leq \exp(|z|^\mu)$ for $|z|\geq R$.
With $E_\mu(x)=\exp(x^\mu)$ we thus have
\begin{equation}\label{3g}
M^k(R,f)\leq E_\mu^k(R)
\end{equation}
for all $k\geq 0$.

Applying Lemma~\ref{la1} with $\alpha=\varepsilon$ and $\beta=\mu$ 
we deduce from~\eqref{3f} and~\eqref{3g} that if $z\in B(f)$,
then $|f^k(z)|\geq M^{k-2}(R,f)$ for all $k\geq 4$, provided $R$ has been chosen sufficiently large.
It follows that $z\in A(f)$ and hence $B(f)\subset A(f)$.
\end{proof}
\begin{proof}[Proof of Theorem~\ref{t4}]
Let $0<\delta<\varepsilon$.  It follows from Proposition~\ref{la2} that if 
\begin{equation}\label{5a}
\left|\frac{zf'(z)}{f(z)}\right| < |z|^{\rho(f)/2+\delta},
\end{equation}
then
\begin{equation}\label{5b}
|f(z)|<R\exp\!\left( 4\pi |z|^{\rho(f)/2+\delta}\right)
\leq \exp\!\left( |z|^{\rho(f)/2+\varepsilon}\right),
\end{equation}
if $|z|$ is sufficiently large.
We conclude that if $z$ is in the set occurring on the left hand side of~\eqref{4b2},
with $\varepsilon$ replaced by~$\delta$, and if $|z|$ is sufficiently large,
then $z$ is also in the set occurring on the left hand side of~\eqref{4b3}.
Thus~\eqref{4b2}, with $\varepsilon$ replaced by~$\delta$, follows from~\eqref{4b3}.

It now follows from Theorem~\ref{t2} that the conclusion of Theorem~\ref{t1} holds
with $I(f)$ replaced by $A(f)$.
Moreover, since $f\in\B$, we deduce from the result of Baker~\cite[p.~565]{Baker1984}
 already used after Theorem~\ref{t1} that $F(f)$ has no multiply connected component.
(To rule out multiply connected components of $F(f)$,
we could alternatively use the result of Eremenko and Lyubich~\cite[Theorem~1]{Eremenko1992}
that if $f\in\B$, then $I(f)\subset J(f)$, together with the well-known fact that 
multiply connected components of $F(f)$ are in $I(f)$.)
We thus conclude that~\eqref{4d} holds with $I(f)$ replaced by $A(f)$, as claimed.
\end{proof}

\section{Proofs of Theorems \ref{t5} and \ref{t6}} \label{proofs45}
For the proof of Theorem~\ref{t5} we shall use the following result of Peters and
Smit~\cite[Proposition~10]{Peters2017}.
\begin{lemma}\label{la3}
Let $p$ be a semihyperbolic polynomial. 
Let $A$ be an open set containing all attracting periodic points such 
that $\overline{p(A)}\subset A\subset F(f)$
and let $R>0$ be such that 
$|p(z)|>2R$ for $|z|>R$.
Let $U_0=\{z\colon|z|>R\}\cup A$ and, for $n\in\N$, put $U_n=f^{-n}(U_0)$ and 
$V_n=\C\backslash U_n$.
Then there exist $c_0>0$ and $\theta\in (0,1)$ such that
\begin{equation}\label{5c}
\area V_n \leq c_0\theta^n
\end{equation}
for all $n\in\N$.
\end{lemma}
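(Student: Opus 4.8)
The plan is to deduce the estimate $\area V_n\le c_0\theta^n$ from a \emph{self-improvement inequality}: there exist $m\in\N$ and $\eta\in(0,1)$, depending only on $p$ and $R$, with $\area V_{n+m}\le\eta\,\area V_n$ for all $n\ge0$. Iterating gives $\area V_n\le\eta^{\lfloor n/m\rfloor}\area V_0$, and since $V_0\subset\{z\colon|z|\le R\}$ has finite area one may take $\theta=\eta^{1/m}$ and $c_0=\area(V_0)/\eta$. Note first that, since $\overline{p(A)}\subset A$ and $|p(z)|>2R>R$ for $|z|>R$, an orbit that once meets $U_0$ stays in $U_0$; hence $V_n=\{z\colon p^j(z)\notin U_0\ \text{for}\ 0\le j\le n\}$, the sets $V_n$ decrease with $\bigcap_nV_n=J(p)$, and $V_{n+m}=V_n\cap p^{-n}(V_m)$.

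The core is a \emph{bounded-distortion (``telescope'') lemma}, which is where semihyperbolicity enters. Fix once and for all a small $\varepsilon_0>0$. For $z\in V_{n+m}$ the point $p^n(z)$ lies in $V_m\subset V_0$; if $m$ is large enough this forces $\dist(p^n(z),J(p))<\varepsilon_0/2$, since a point of $V_0$ lying a definite distance from $J(p)$ belongs to a compact subset of $(\C\backslash K(p))\cup(\text{attracting basins})$ and hence enters $U_0$ within a number of steps bounded independently of $z$ (a compactness argument, using that the first-entry-time sublevel sets $p^{-k}(U_0)$ are open and exhaust that compact set). Choosing $z_i\in J(p)$ with $|p^n(z)-z_i|<\varepsilon_0/2$ and letting $G_i$ be the component of $p^{-n}(D(z_i,\varepsilon_0))$ containing $z$, the sets $G_i$ thus cover $V_{n+m}$. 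Semihyperbolicity — applied at a point of $J(p)$ within $\varepsilon_0/2$ of $z_i$, with $\varepsilon_0$ chosen small in terms of the semihyperbolicity constant — bounds the degree of $p^n\colon G_i\to D(z_i,\varepsilon_0)$ by a constant $N$; by the Koebe distortion theorem for the univalent branches when $J(p)$ is connected, and by a normal-families compactness argument for bounded-degree proper self-maps of a disk in general, this yields constants $C\ge1$ and $c,c'>0$ depending only on $p$ such that the $G_i$ may be taken to have bounded overlap and to satisfy $\area\big((p^n|_{G_i})^{-1}(D')\big)\ge c'\,\area(G_i)$ for every round disk $D'\subset D(z_i,\varepsilon_0)$ of radius at least $c\varepsilon_0$.

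The second ingredient is uniform porosity of $J(p)$ with a uniform escape time. For semihyperbolic $p$ the compact set $J(p)$ is uniformly porous (part of the Carleson–Jones–Yoccoz analysis), so each disk $D(z_i,\varepsilon_0)$ as above contains a round disk $D_i'$ of radius at least $c\varepsilon_0$ that is disjoint from $J(p)$; in particular $D_i'$ lies in $F(p)$ and a definite distance from $J(p)$, hence in a compact subset of $(\C\backslash K(p))\cup(\text{attracting basins})$, so $p^{m_0}(D_i')\subset U_0$ for some $m_0$ independent of $i$, that is, $D_i'\cap V_{m_0}=\emptyset$. Taking $m$ larger than $m_0$ and than the escape bound from the previous step, and combining with the telescope lemma,
\[
\area(V_{n+m}\cap G_i)\le\area\big((p^n|_{G_i})^{-1}(V_m)\big)\le\area\big((p^n|_{G_i})^{-1}(D(z_i,\varepsilon_0)\setminus D_i')\big)\le(1-c')\area(G_i),
\]
and summing over $i$ with bounded overlap gives $\area V_{n+m}\le\eta\,\area V_n$ for some $\eta\in(0,1)$, as required.

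I expect the main obstacle to be the telescope lemma, exactly as in the original Carleson–Jones–Yoccoz work: controlling the shape and size of the branch domains $G_i$ of $p^{-n}$ uniformly in $n$, in particular obtaining the area comparison $\area\big((p^n|_{G_i})^{-1}(D')\big)\ge c'\,\area(G_i)$ in the disconnected case, where the Koebe theorem is not available directly and one must substitute a compactness argument, and verifying that the resulting cover of $V_{n+m}$ has bounded overlap and genuinely has the target disks centred near $J(p)$. Once that structural input is secured, the porosity estimate and the summation are routine, and the self-improvement inequality — hence the lemma — follows.
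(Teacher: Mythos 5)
The paper does not prove this lemma at all -- it quotes it as Proposition~10 of Peters and Smit -- so your attempt has to stand on its own, and as written it has two genuine gaps. The first is the ``telescope'' estimate $\area\bigl((p^n|_{G_i})^{-1}(D')\bigr)\geq c'\,\area(G_i)$, which you yourself identify as the main obstacle but then dispose of by ``Koebe for univalent branches'' and ``a normal-families compactness argument for bounded-degree proper self-maps of a disk''. Neither works. With $G_i$ equal to the full preimage component of $D(z_i,\varepsilon_0)$ and $D'$ an arbitrary disk of definite relative radius, the statement is false already in degree one: if $g\colon\D\to\D$ is the M\"obius map $z\mapsto(z-a)/(1-\bar az)$ with $a$ close to $1$ and $D'=D(0,\tfrac14)$, then $g^{-1}(D')$ is a hyperbolic ball about $a$ of Euclidean area comparable to $(1-a)^2$, so the ratio $\area(g^{-1}(D'))/\area(G)$ tends to $0$. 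Relative areas of preimages are simply not controlled by the degree, and the family of degree-$\leq N$ proper maps onto a disk is not compact in any sense that would give such a bound (branch points and the relevant ``mass'' can degenerate to the boundary). To make the estimate true one must exploit semihyperbolicity with a collar: take $2\varepsilon_0$ below the semihyperbolicity radius, so that $p^n$ has degree $\leq N$ on the component of $p^{-n}(D(z_i,2\varepsilon_0))$ containing $G_i$, and then prove genuine distortion control (e.g.\ via a Riemann map plus Blaschke factorization, a bound on the hyperbolic diameter of the component of the preimage of the half-radius disk, and Koebe distortion for the Riemann map), also handling possibly multiply connected preimage components. Your aside that the branches are univalent when $J(p)$ is connected is also wrong -- critical points may lie on $J(p)$ (Chebyshev polynomials) -- and univalence on the un-enlarged disk would not give distortion control anyway, as the same M\"obius example shows.

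The second gap is the final summation. Bounded overlap by the cardinality $k$ of the $\varepsilon_0/2$-net only gives $\area V_{n+m}\leq(1-c')\,k\,\area\bigl(\bigcup_iG_i\bigr)$, and $(1-c')k$ is in general much larger than $1$; moreover $\bigcup_iG_i$ need not be contained in $V_n$ (the disks $D(z_i,\varepsilon_0)$ may leave $V_0$, and points of $G_i$ need not have avoided $U_0$ at earlier times), so even the comparison with $\area V_n$ is not in place. To extract the self-improvement $\area V_{n+m}\leq\eta\,\area V_n$ you need essentially disjoint pieces -- say a Vitali-type selection, which requires knowing the $G_i$ are quasi-round, i.e.\ exactly the distortion input missing above -- or a pointwise density estimate plus a covering argument, or a different scheme altogether (for instance: $V_n$ lies in an exponentially small neighbourhood of $J(p)$ by expansion/Green's function estimates in the John basin, and porosity of $J(p)$ bounds the area of such neighbourhoods by a power of their width). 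As it stands, the displayed chain of inequalities does not yield the claimed recursion, so the proposal is an outline of the right flavour but not a proof.
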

The proof of Theorem~\ref{t5} is easier if $J(p)$ is connected, because -- as noted
already -- this is equivalent to $f\in\B$
so that Theorem~\ref{t4} can be applied. Therefore we consider this special case first,
and add the arguments required for the general case afterwards.
\begin{proof}[Proof of Theorem~\ref{t5} if $J(p)$ is connected]
Let $R$, $A$, $U_n$ and $V_n$ be as in Lemma~\ref{la3}. Since $p$ does not have 
attracting periodic points we can take $A=\emptyset$. Hence $U_0=\{z\colon|z|>R\}$
and thus
\begin{equation}\label{5d}
V_n=\{z\colon |p^n(z)|\leq R\}.
\end{equation}
Note that if $z\in V_n$, then also $|p^k(z)|\leq R$ for $1\leq k\leq n$. 

We may assume that $R>1$.
Denote by $d$ the degree of~$p$.
It is easy to see that there exists a positive constant $c_1$ such that if $|z|> R$, then
$|p^n(z)|> \exp\!\left(c_1 d^n\right)$.
For example, this follows from B\"ottcher's theorem~\cite[Theorem~9.1]{Milnor2006}
which says that $p$ is conjugate to $z\mapsto z^d$ in some neighborhood of~$\infty$.

Let $\varepsilon>0$. For $n\in\N$ we put 
$m=\lceil \varepsilon n \rceil$ and $W_n=V_m$.
With $c_0$ and $\theta$ as in Lemma~\ref{la3} and $\gamma=\theta^\varepsilon$ we then have
\begin{equation}\label{5e}
\area W_n \leq c_0\theta^m \leq c_0\theta^{\varepsilon n}= c_0 \gamma^n.
\end{equation}
If $z\notin W_n$, then $|p^m(z)|>R$ and thus 
\begin{equation}\label{5f}
\left|p^n(z)\right|=\left|p^{n-m}\left(p^m(z)\right)\right|
\geq \exp\!\left(c_1 d^{n-m}\right)
= \exp\!\left(c_1 d^{\lfloor (1-\varepsilon)n\rfloor}\right).
\end{equation}
With $c_2=c_1/d$ we thus have
\begin{equation}\label{5g}
|p^n(z)|\geq \exp\!\left(c_2 d^{(1-\varepsilon)n}\right) \quad\text{for }z\notin W_n.
\end{equation}

Let now $\lambda\in\C$ with $|\lambda|>1$ be such that
Schr\"oder's functional equation~\eqref{4s} holds.
As noted before Theorem~\ref{t5}, our hypotheses imply that $f\in\B$.

Choosing $r_0\in (0,1]$ sufficiently small we may achieve that $f$ is univalent in $D(0,2r_0)$.
In particular, $f'(z)\neq 0$ for $z\in A:=\{\zeta\colon r_0/|\lambda|\leq |\zeta|\leq r_0\}$.
With
\begin{equation}\label{5h}
S_n:=f^{-1}(W_n)\cap A
\end{equation}
and 
\begin{equation}\label{5h1}
c_3:=\frac{1}{\displaystyle\min_{z\in A}|f'(z)|^2}
\end{equation}
we then have
\begin{equation}\label{5i}
\area S_n\leq c_3 \area W_n.
\end{equation}

For $n\in\N$ we put
\begin{equation}\label{5l}
A_n:=\lambda^n A=\{z\colon |\lambda|^{n-1}r_0\leq |z|\leq |\lambda|^n r_0\}
\quad\text{and}\quad
T_n:=\lambda^n S_n.
\end{equation}
For $|z|\geq r_0$ we now choose $n\in\N$ such that $z\in A_n$.
Then $z$ has the form $z=\lambda^n\zeta$ with $\zeta\in A$.
If $z\in A_n\backslash T_n$, then $\zeta\in A\backslash S_n$ and thus $f(\zeta)\notin W_n$.
Thus~\eqref{5g} yields that
\begin{equation}\label{5j}
|f(z)|=|f(\lambda^n \zeta)|=|p^n(f(\zeta))|\geq \exp\!\left(c_2 d^{(1-\varepsilon)n}\right)
\quad\text{for } z\in A_n\backslash T_n.
\end{equation}
As already mentioned before Theorem~\ref{t5} we have $\rho(f)=\log d/\log|\lambda|$ so that 
$d=|\lambda|^{\rho(f)}$. Noting that $|\lambda|^n\geq |z|/r_0\geq |z|$ for $z\in A_n$ we thus find that
\begin{equation}\label{5k}
d^{(1-\varepsilon)n}=|\lambda|^{(1-\varepsilon)\rho(f)n}
\geq |z|^{(1-\varepsilon)\rho(f)}
\quad\text{for }z\in A_n.
\end{equation}
Combining the last two inequalities we thus find 
that
\begin{equation}\label{5m}
|f(z)|\geq \exp\!\left(c_2 |z|^{(1-\varepsilon)\rho(f)}\right)
\quad\text{for }z\in A_n\backslash T_n.
\end{equation}
Now 
\begin{equation}\label{5n}
\area T_n =|\lambda|^{2n} \area S_n \leq c_0c_3|\lambda|^{2n}\gamma^n
\end{equation}
by~\eqref{5e} and~\eqref{5i} and thus
\begin{equation}\label{5o}
\logarea T_n = \int_{T_n}\frac{dx\,dy}{|z|^2}\leq 
\frac{1}{(|\lambda|^{n-1}r_0)^2}\area T_n
\leq \frac{|\lambda|^2c_0c_3}{r_0^2}\gamma^n.
\end{equation}
We conclude that 
\begin{equation}\label{5p}
T:=\bigcup_{n=1}^\infty T_n
\end{equation}
satisfies
\begin{equation}\label{5q}
\logarea T <\infty.
\end{equation}
On the other hand, we have 
\begin{equation}\label{5r}
\left\{z\colon |z|\geq r_0\text{ and } 
|f(z)|< \exp\!\left(c_2 |z|^{(1-\varepsilon)\rho(f)}\right) \right\}
\subset T
\end{equation}
by~\eqref{5m}.
Thus~\eqref{4b3} holds if $\varepsilon$ is chosen such that
$(1-\varepsilon)\rho(f)>\rho(f)/2+\varepsilon$.

Note that we have not used yet that $J(p)$ is connected.
But since we assume that this is the case, we have $f\in\B$.
Thus~\eqref{4b3} yields the conclusion in view of Theorem~\ref{t4}.
\end{proof}

To deal with the general case, we use the following result of Carleson, Jones
and Yoccoz~\cite[Theorem~2.1]{Carleson1994}, which was also crucial in the proof
of Lemma~\ref{la3} in~\cite{Peters2017}.
Here $\diam A$ denotes the (Euclidean) diameter of a subset $A$ of~$\C$.
\begin{lemma}\label{la4}
Let $p$ be a semihyperbolic polynomial.
Then there exist $\eta>0$, $K_0>0$ and $\tau\in (0,1)$ such that
if $z\in J(f)$, $n\in\N$ and $V$ is a component of $f^{-n}(D(z,\eta))$,
then
\begin{equation}\label{6a}
\diam V \leq K_0\tau^n.
\end{equation}
\end{lemma}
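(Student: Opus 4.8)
The plan is simply to quote \cite[Theorem~2.1]{Carleson1994}, the result of Carleson, Jones and Yoccoz from which Lemma~\ref{la4} is taken; what follows only sketches the argument behind it, for orientation. Semihyperbolicity provides $\varepsilon_0>0$ and $N\in\N$ such that for every $z\in J(p)$, every $n\in\N$, and every component $V$ of $p^{-n}(D(z,\varepsilon_0))$ the map $p^n\colon V\to D(z,\varepsilon_0)$ is proper of degree at most~$N$. One fixes $\eta\in(0,\varepsilon_0]$ small, to be decreased at the very end, and realises the component $V=V_n$ in the statement as the last link of a \emph{pullback chain} $V_n\to V_{n-1}\to\cdots\to V_0$, where $V_0=D(z,\eta)$, each $V_j$ is the component of $p^{-j}(D(z,\eta))$ containing $p^{\,n-j}(V_n)$, each $p\colon V_j\to V_{j-1}$ is proper of degree at most~$N$, and the ``centres'' $\zeta_j\in V_j$ may be taken to form a backward orbit of a point of $D(z,\eta)$.

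The substance is to extract uniform exponential contraction along such a chain. One works with a conformal metric $\rho$ on a neighbourhood of $J(p)$ that is comparable to the Euclidean metric away from the critical set of~$p$ and with respect to which $p$ expands by a definite factor $\lambda>1$, except on small neighbourhoods of the finitely many critical points of~$p$; call a link \emph{bad} if its centre lies in such a neighbourhood. On a bad link $\rho$ can be contracted, but only by a factor controlled by the local degree, hence by~$N$. The point of semihyperbolicity, beyond the degree bound, is that it entails non-recurrence of the critical points lying in $J(p)$ in a quantified, uniform form; this forces the bad links in any chain of length $n$ to be few --- at most a small fixed fraction of~$n$, distributed in a bounded number of short blocks --- while on the remaining good links $\rho$ genuinely expands. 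An averaging argument then produces $\tau\in(0,1)$ and $K_1>0$, independent of $z$ and~$n$, with $\diam_{\rho}V_n\le K_1\tau^{n}\diam_{\rho}V_0$. Since $\eta$ is fixed small the whole chain remains in a region where $\rho$ and the Euclidean metric are comparable, and \eqref{6a} follows with a suitable~$K_0$.

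The main obstacle is exactly this last counting step: converting the qualitative facts that preimage components have bounded degree and that critical orbits return slowly into a \emph{uniform} bound on the number and spacing of bad links in an arbitrary pullback chain, and then balancing the bounded contraction on the bad blocks against the genuine expansion on the good links so that the resulting rate is exponential and independent of $z$ and~$n$. This is the technical heart of \cite{Carleson1994}, and in the present paper it is cleanest to quote \cite[Theorem~2.1]{Carleson1994} directly, as is also done in~\cite{Peters2017} in the proof of Lemma~\ref{la3}.
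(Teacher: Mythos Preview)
Your proposal is correct and matches the paper exactly: the paper does not prove Lemma~\ref{la4} but simply quotes it as \cite[Theorem~2.1]{Carleson1994}, noting (as you do) that this result was also crucial in the proof of Lemma~\ref{la3} in~\cite{Peters2017}. Your additional sketch of the Carleson--Jones--Yoccoz argument is extra context the paper does not provide, but your stated plan to cite the result directly is precisely what the paper does.
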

In order to rule out multiply connected wandering domains, we will use the 
following result of Zheng~\cite{Zheng2006}.
\begin{lemma}\label{la4a}
Let $f$ be a transcendental entire function with a multiply connected wandering 
domain~$U$. Then there exist sequences $(r_n)$ and $(R_n)$ satisfying 
$r_n\to\infty$ and $R_n/r_n\to\infty$
such that 
\begin{equation}\label{6a0}
\{z\colon r_n\leq |z|\leq R_n\}
\subset
f^n(U)
\subset
\{z\colon R_{n-1}\leq |z|\leq r_{n+1}\}
\end{equation}
for large~$n$.
\end{lemma}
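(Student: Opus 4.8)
Since Lemma~\ref{la4a} is precisely the result of Zheng quoted from \cite{Zheng2006}, the ``proof'' consists of a reference, so let me instead indicate the argument behind it; it is built on Baker's classical analysis of multiply connected wandering domains \cite{Baker1984} (see also \cite{Bergweiler1993}).

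I would first record the standard facts that a multiply connected wandering domain $U$ is bounded, that each $U_n:=f^n(U)$ is again a bounded multiply connected wandering domain, that $f^n\to\infty$ locally uniformly on $U$ and indeed $\dist(0,U_n)\to\infty$, and that for all large $n$ the point $0$ lies in a bounded component of $\C\setminus U_n$, so that $U_n$ surrounds~$0$. Put
\begin{equation}
a_n=\min_{z\in\overline{U_n}}|z|
\quad\text{and}\quad
b_n=\max_{z\in\overline{U_n}}|z|.
\end{equation}
Then $a_n\to\infty$, we have $U_n\subset\{z\colon a_n\le|z|\le b_n\}$, and since $U_n$ is connected and surrounds $0$ the set $\overline{U_n}$ meets every circle $\{z\colon|z|=t\}$ with $a_n\le t\le b_n$. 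Proving \eqref{6a0} then reduces to choosing radii $r_n\le R_n$ such that (i)~the closed round annulus $\{z\colon r_n\le|z|\le R_n\}$ is contained in $U_n$, with $r_n\to\infty$ and $R_n/r_n\to\infty$, and (ii)~$R_{n-1}\le a_n$ and $b_n\le r_{n+1}$ for all large $n$; indeed (i) gives the left inclusion in \eqref{6a0}, while $U_n\subset\{z\colon a_n\le|z|\le b_n\}$ together with (ii) gives the right one. Statement (i) is the heart of Baker's theory and I would simply invoke it: the fixed curves $f^n(\gamma)\subset U_n$, where $\gamma\subset U$ winds once around a point of $\C\setminus U$, wind more and more times around $0$ (the winding number being the number of zeros of $f^n$ enclosed by $\gamma$), which forces $b_n/a_n\to\infty$ and, via a Hurwitz-type argument applied to a foliation of a neighbourhood of $\gamma$, the existence of round annuli of modulus tending to $\infty$ inside $U_n$.

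For (ii) I would shrink the annulus from (i) to $\{z\colon r_n\le|z|\le R_n\}$, so that $a_n\le r_n$ and $R_n\le b_n$ hold automatically, and then determine the radii inductively, arranging $R_n\le a_{n+1}$ and $b_n\le r_{n+1}$ for all large $n$. Since the circle $\{z\colon|z|=R_n\}$ lies in $U_n$ and $f(\overline{U_n})=\overline{U_{n+1}}$, one has $b_{n+1}=\max_{\overline{U_n}}|f|\ge M(R_n,f)$ and $a_{n+1}=\min_{\overline{U_n}}|f|$, so these two constraints amount to an upper bound for $b_n$ and, more importantly, a lower bound for $\min_{\overline{U_n}}|f|$, both to be read off from the geometry of the (very fat, and zero-free) round annulus inside $U_n$ together with Baker's growth estimates for $a_n$ and $b_n$. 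I expect the lower bound for $\min_{\overline{U_n}}|f|$ — where one can no longer stay on the round annulus, on which $\log|f|$ is harmonic and easily controlled — to be the main obstacle. Once it is in hand, a routine induction produces radii $r_n\le R_n$ with $r_n\to\infty$ and $R_n/r_n\to\infty$ satisfying (i) and (ii), and hence \eqref{6a0}.
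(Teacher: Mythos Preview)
Your proposal is correct: the paper does not prove Lemma~\ref{la4a} at all but simply quotes it as a result of Zheng~\cite{Zheng2006}, exactly as you say in your opening sentence. The sketch you add of the underlying Baker--Zheng argument is reasonable supplementary exposition and goes beyond what the paper itself provides.
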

The conclusion that  $R_n/r_n\to\infty$ was strengthened to $R_n\geq r_n^{1+\varepsilon}$
for some $\varepsilon>0$ in~\cite[Theorem~1.2]{Bergweiler2013}, but we do not need this result here.
\begin{proof}[Proof of Theorem~\ref{t5} in the general case]
We will use the notation and results of the proof given above for
the special case that $J(p)$ is connected.
In particular, the set $T$ defined by~\eqref{5p} satisfies~\eqref{5q} and~\eqref{5r}.
In order to apply Theorem~\ref{t2} it remains to find an upper bound for the size of the set
where $|zf'(z)/f(z)|< |z|^{\rho(f)/2+\varepsilon}$.

To estimate $|f'(z)|$ we note that
\begin{equation}\label{6a1}
\lambda^n f'(\lambda^n \zeta)=(p^n)'(f(\zeta))f'(\zeta)
\end{equation}
by~\eqref{4s}. We are thus looking for an estimate of $\left|\left(p^n)'(z\right)\right|$
for $z\in\C\backslash W_n$. Here, as before, $W_n=V_m$ where $m=\lceil \varepsilon n \rceil$ and
$V_m$ is defined by~\eqref{5d}.
As before we write $p^n(z)=p^{n-k}(p^k(z))$ so that 
\begin{equation}\label{6a2}
(p^n)'(z)=(p^{n-k})'(p^k(z))(p^k)'(z).
\end{equation}
We will then estimate $|(p^k)'(z)|$ for $z\in\C\backslash V_m$, where $k$ is chosen
such that $p^k(z)\in\C\backslash V_0=\{w\colon |w|>R\}$, together with an 
estimate of $|(p^{n-k})'(w)|$ for $|w|>R$.

We may assume that $R$ in~\eqref{5d}
is chosen so large that $|p'(z)|>1$ for $|z|>R$.
In particular, this implies that all critical points of $p$ are contained in $V_0=\overline{D(0,R)}$.
Let $\eta$ be as in Lemma~\ref{la4}.
We may assume that $\eta$ is chosen so small that if $c$ is a critical 
point of $p$ which is not contained in $J(p)$, then $\dist(p^k(c),J(p))>\eta$ for all
$k\geq 0$, where $\dist(\cdot,\cdot)$ denotes the (Euclidean) distance.
This assumption can be made since $p^k(c)\to\infty$ for every critical point $c\notin J(p)$.

There exists $M\in\N$ such that
\begin{equation}\label{6b}
V_{M-1}\subset \left\{ \zeta\colon \dist(\zeta,J(p))\leq \frac12\eta\right\}.
\end{equation}
By the choice of $\eta$ the only critical points of $p$ that are contained in
 $V_{M-1}$ are those that are already contained in $J(p)$.
Together with the choice of $R$ we thus see that the critical points of $p$ that 
are not contained in $J(p)$ are contained in $V_0\backslash V_{M-1}$.

Now $d_0:=\dist(V_{M-1}\backslash V_{M},J(p))$ satisfies $0<d_0<\eta/2$.
We conclude that if $w\in V_{M-1}\backslash V_{M}$, then
$D(w,d_0)\cap J(p)=\emptyset$. 
For $\xi\in J(p)$ we then have $D(w,d_0)\subset D(\xi,\eta)$.

Let now $k>M$ and $z\in V_{k-1}\backslash V_{k}$. 
Then $w=p^{k-M}(z)\in V_{M-1}\backslash V_{M}$.
Denote by $U$ the component of $p^{-(k-M)}(D(w,d_0))$ that contains~$z$.
Then, as just noted, $U$ is contained in a component of $p^{-(k-M)}(D(\xi,\eta))$ 
for some $\xi\in J(p)$ and thus Lemma~\ref{la4} implies that
\begin{equation}\label{6c}
\diam U \leq K_0\tau^{k-M}.
\end{equation}
Since our choice of $\eta$ implies that $D(w,d_0)$ does not intersect the 
orbit of any critical point,
$p^{k-M}\colon U\to D(w,d_0)$ is biholomorphic.
Koebe's one quarter theorem, applied to the inverse $\varphi\colon D(w,d_0)\to U$ of
$p^{k-M}\colon U\to D(w,d_0)$, thus yields that
\begin{equation}\label{6d}
U=\varphi(D(w,d_0))
\supset D\!\left(\varphi(w),\frac14 |\varphi'(w)|d_0\right)
= D\!\left(z,\frac{d_0}{4 |(p^{k-M})'(z)|}\right).
\end{equation}
Hence we can deduce from~\eqref{6c} that if $k\geq M$, then
\begin{equation}\label{6e}
|(p^{k-M})'(z)|\geq \frac{c_4}{\tau^{k-M}}
\quad\text{for }z\in V_{k-1}\backslash V_{k}.
\end{equation}
with $c_4=d_0/(2K_0)$.

Next we note that there exists $c_5,K>0$ such that if $t>0$ and $|z-c|>t$
for every critical point $c$ of $p$, then $|p'(z)|\geq c_5t^{K}$.
It follows that there exists $c_6,L>0$ such that if $1\leq k\leq M$ and $\delta>0$, then
\begin{equation}\label{6f}
\area\!\left\{z\in V_{k-1}\backslash V_{k}\colon \left|(p^{M})'(z)\right|
\leq \delta \right\}\leq c_6 \delta^L .
\end{equation}
In particular, this holds for $k=M$, which together with~\eqref{6e} yields that
\begin{equation}\label{6g}
\area\!\left\{z\in V_{k-1}\backslash V_{k}\colon 
\left|(p^{k})'(z)\right|\leq \frac{c_4\delta}{\tau^{k-M}} \right\}
\leq d^{k-M} \left(\frac{\tau^{k-M}}{c_4}\right)^2  c_6 \delta^L
\end{equation}
for $k>M$.
Since $\tau<1$ we thus have
\begin{equation}\label{6h}
\area\!\left\{z\in V_{k-1}\backslash V_{k}\colon 
\left|(p^{k})'(z)\right|\leq c_4\delta \right\}
\leq c_7 d^{k} \delta^L  
\end{equation}
for $k>M$, with $c_7=c_6/c_4^2$.
We may assume that $c_4\leq 1$ so that~\eqref{6h} also holds for $1\leq k\leq M$ by~\eqref{6f}.

Next, as explained after~\eqref{6a2}, we 
want to
estimate $|(p^j)'(w)|$ for $|w|> R$.
In order to do so, let $g$ be the Green function of the (super)attracting basin of~$\infty$. Then 
\begin{equation}\label{6j}
g(p(z))=dg(z).
\end{equation}
This implies that $g(p^j(z))=d^j g(p(z))$ and thus 
\begin{equation}\label{6k}
|\nabla g(p^j(z))|\cdot | (p^j)'(z)|=d^j|\nabla g(z)|.
\end{equation}
We have $g(z)=\log|z|+c+o(1)$ as $z\to\infty$ for some constant $c$. It is not difficult
to show that this implies that 
\begin{equation}\label{6l}
|\nabla g(z)|\sim \frac{1}{|z|}
\end{equation}
as $z\to\infty$. Hence 
\begin{equation}\label{6m}
\left| \frac{(p^j)'(z)}{p^j(z)}\right|\sim d^j|\nabla g(z)|
\end{equation}
as $j\to\infty$.
Using~\eqref{6l} again we deduce that there exists a positive constant $c_8$ such that 
\begin{equation}\label{6n}
\left| \frac{(p^j)'(w)}{p^j(w)}\right|\geq \frac{c_8}{|w|} d^j
\quad\text{for } |w|\geq R.
\end{equation}

Recall from the proof for the special case that $J(p)$ is connected
that for $n\in\N$ we put $m=\lceil \varepsilon n \rceil$ and $W_n=V_m$.
With $\alpha= d^{-2\varepsilon/L}$ we now put
\begin{equation}\label{6o}
W_n'=
\bigcup_{k=1}^m\left\{z\in V_{k-1}\backslash V_{k}\colon \left|(p^{k})'(z)\right|\leq c_4\alpha^n \right\}.
\end{equation}
and deduce from~\eqref{6h} that
\begin{equation}\label{6p}
\area W_n'\leq c_7 \alpha^{Ln} \sum_{k=1}^m d^{k}  
\leq \frac{c_7d}{d-1} \alpha^{Ln} d^{m}  
\leq c_9 \alpha^{Ln} d^{\varepsilon n}  
= c_9 d^{-\varepsilon n}  
\end{equation}
with $c_9=c_7d^2/(d-1)$.

Let now $z\in \overline{D(0,R)}\backslash (W_n\cup W_n')=V_0\backslash (V_m\cup W_n')$. Then
$z\in V_{k-1}\backslash V_k$
for some $k\in \{1,\dots,m\}$.
Since $z\notin W_n'$ we have $\left|(p^{k})'(z)\right|> c_4\alpha^n$.
Moreover, $w:=p^k(z)$ satisfies 
$R<|w|\leq M(R,p)$. Together with~\eqref{6n} we thus find  with $c_{10}=c_4c_8/(M(R,p)d)$ that
\begin{equation}\label{6q}
\begin{aligned}
\left| \frac{(p^n)'(z)}{p^n(z)}\right|
&=
\left| \frac{(p^{n-k})'(w)}{p^{n-k}(w)}(p^k)'(z)\right|
 >  \frac{c_8}{|w|}  d^{n-k}c_4\alpha^n
\\ &
\geq \frac{c_4c_8}{M(R,p)} d^{n-m}\alpha^n
\geq c_{10} d^{(1-\varepsilon)n}\alpha^n
= c_{10} d^{(1-\varepsilon-2\varepsilon/L)n}.
\end{aligned}
\end{equation}
With
$\varepsilon'=\varepsilon+2\varepsilon/L$ we thus have
\begin{equation}\label{6r}
\left| \frac{(p^n)'(z)}{p^n(z)}\right| \geq  c_{10}  d^{(1-\varepsilon')n}
\quad\text{for } z\in \overline{D(0,R)}\backslash (W_n\cup W_n').
\end{equation}
Similarly as in~\eqref{5h} we consider
\begin{equation}\label{6s}
S_n':=f^{-1}(W_n\cup W_n')\cap A
\end{equation}
and deduce, analogously to~\eqref{5i}, that
\begin{equation}\label{6t}
\area S_n'\leq c_3 ( \area W_n+ \area W_n' ).
\end{equation}
In analogy to the previous arguments we put $T_n'=\lambda^n S_n'$.
Writing $z\in A_n$ in the form $z=\lambda^n\zeta$ with $\zeta\in A$
we have
\begin{equation}\label{6u}
\frac{zf'(z)}{f(z)} =
\frac{\lambda^n \zeta f'(\lambda^n \zeta )}{f(\lambda^n \zeta )} =
\frac{(p^n)'(f(\zeta)) \zeta f'(\zeta)}{p^n(f(\zeta))} 
\end{equation}
by~\eqref{6a1}.
Using~\eqref{5h1} and~\eqref{6r} we deduce that
\begin{equation}\label{6v}
\left|\frac{zf'(z)}{f(z)} \right| \geq  c_{11}  d^{(1-\varepsilon')n}
\quad\text{for } z\in A_n\backslash T_n'
\end{equation}
with $c_{11}=c_{10} r_0/(|\lambda|\sqrt{c_3})$.
It thus follows from~\eqref{5k} that 
\begin{equation}\label{6w}
\left|\frac{zf'(z)}{f(z)} \right| \geq  c_{11}  |z|^{(1-\varepsilon')\rho(f)}
\quad\text{for } z\in A_n\backslash T_n'.
\end{equation}
In analogy to~\eqref{5p}, \eqref{5q} and~\eqref{5r} we now deduce from~\eqref{6t},
\eqref{6p} and~\eqref{5e} that the set $T'$ defined by
\begin{equation}\label{5p1}
T':=\bigcup_{n=1}^\infty T_n'
\end{equation}
satisfies
\begin{equation}\label{5q1}
\logarea T' <\infty
\end{equation}
and
\begin{equation}\label{5r1}
\left\{z\colon |z|\geq r_0\text{ and }
\left|\frac{zf'(z)}{f(z)} \right| <  c_{10}  |z|^{(1-\varepsilon')\rho(f)}
\right\}
\subset T'.
\end{equation}
It follows from~\eqref{5q1} and~\eqref{5r1}, together with~\eqref{5q} and~\eqref{5r},
that~\eqref{4b2} holds if $\varepsilon$ and hence $\varepsilon'$ are sufficiently small.
The conclusion will thus follow from Theorem~\ref{t2} if we can show that
$f$ does not have multiply connected wandering domains.

In order to do so, let $u_0\in\C$ such that $v_0:=f(u_0)\in J(p)$. It follows from~\eqref{4s} that
\[
f(\lambda^n u_0)=p^n(f(u_0))=p^n(v_0)\in J(p)
\]
and thus $|f(\lambda^n u_0)|\leq R$ for all $n\in\N$.
Lemma~\ref{la4a} now implies that $f$ does not have multiply connected wandering domains.
\end{proof}
The result of Eremenko and Lyubich~\cite[Theorem~7]{Eremenko1992} already mentioned in the
introduction that we will use is the following.
\begin{lemma}\label{la5}
Let $f\in\B$ and suppose that there exists $R>0$ such that
\begin{equation}\label{7a}
\liminf_{r\to\infty} \frac{\logarea\!\left( f^{-1}(D(0,R))\cap D(0,r)\cap \Delta \right)}{\log r}>0.
\end{equation}
Then $\area I(f)=0$.
\end{lemma}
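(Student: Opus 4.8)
The plan is to reconstruct this result of Eremenko and Lyubich via the logarithmic change of variable. First a reduction: since $f\in\B$, choose $R$ larger than the moduli of all critical and asymptotic values of $f$ and larger than $|f(0)|$, so that the components of $\Omega:=f^{-1}(\{w\colon|w|>R\})$ (the tracts) are disjoint, unbounded, simply connected domains, each mapped by $f$ as a universal covering onto $\{w\colon|w|>R\}$, and $0\notin\Omega$. If $z\in I(f)$ then $f^n(z)\in\Omega$ for all large $n$, so $I(f)\subset\bigcup_{N\ge 0}f^{-N}(A)$, where $A:=\{z\colon f^n(z)\in\Omega\text{ for all }n\ge 0\}$. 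Since a non-constant entire function pulls Lebesgue null sets back to Lebesgue null sets, it suffices to show $\area A=0$; and because $A\cap\{|z|\le R\}\subset f^{-1}(A\cap\{|z|>R\})$, it even suffices to show $\logarea(A\cap\{|z|>R\})=0$.

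Next I pass to logarithmic coordinates. Put $H=\{w\colon\re w>\log R\}$, so that $\exp$ maps $H$ onto $\{z\colon|z|>R\}$ and, crucially, pulls logarithmic area back to ordinary area. Let $\mathcal{T}=\exp^{-1}(\Omega)$ and $\mathcal{T}_H=\mathcal{T}\cap H$. On each component $\mathcal{T}_j$ of $\mathcal{T}$ the map $\exp$ is a conformal isomorphism onto a tract, and there is a holomorphic $F\colon\mathcal{T}\to H$ with $\exp\circ F=f\circ\exp$ restricting to a conformal isomorphism of each $\mathcal{T}_j$ onto $H$. By Proposition~\ref{la2}, $|F'(w)|\ge\frac{1}{4\pi}(\re F(w)-\log R)$ on $\mathcal{T}$; equivalently the univalent inverse $\psi_j=(F|_{\mathcal{T}_j})^{-1}$ satisfies $|\psi_j'(\eta)|\le 4\pi/\dist(\eta,\partial H)$ for $\eta\in H$. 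Unravelling the definitions, $\exp^{-1}(A)\cap H=\bigcap_{N}\mathcal{A}_N$ where $\mathcal{A}_N=\{w\colon F^k(w)\in\mathcal{T}_H\text{ for }0\le k\le N\}$; and, after discarding the bounded set $\{1\le|z|\le R\}$, hypothesis~\eqref{7a} amounts to the assertion that the $2\pi i$-periodic set $\mathcal{G}:=H\setminus\mathcal{T}_H$ (which equals $\exp^{-1}(\{z\colon|z|>R,\ |f(z)|<R\})$ up to a null set) has positive lower density in $H$: with $Q$ a vertical fundamental strip, $\liminf_{t\to\infty}t^{-1}\area(\mathcal{G}\cap Q\cap\{\re w\le t\})>0$.

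The core is then to show $\area(\mathcal{A}_N\cap Q)\to 0$, which I would obtain by a conditional Borel--Cantelli argument. Escaping orbits of $F$ run off with $\re F^n\to\infty$, and the branches $\psi_j$ have, by the estimate above, absolutely bounded distortion on every Koebe disk $D(w_0,\tfrac12\dist(w_0,\partial H))$; hence, up to bounded-distortion factors, the part of a component of $\mathcal{A}_N$ that fails to lie in $\mathcal{A}_{N+m}$ is the part of the region of $H$ over which its $F^m$-image spreads that meets $\mathcal{G}$. Since $\mathcal{G}$ has density at least some fixed $\delta_0>0$ over any sufficiently long window of real parts, one obtains $m\in\N$ and $\delta>0$ such that every component of every $\mathcal{A}_N$ loses at least a $\delta$-fraction of its area on passing to $\mathcal{A}_{N+m}$. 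Iterating this down the generations gives $\area(\mathcal{A}_N\cap Q)\le(1-\delta)^{\lfloor N/m\rfloor}\area(\mathcal{A}_0\cap Q)\to 0$, hence $\area\mathcal{A}=0$ and therefore $\area I(f)=0$.

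The main obstacle is precisely this quantitative step. Hypothesis~\eqref{7a} supplies only a positive \emph{average} density of $\mathcal{G}$ over long ranges of $\log|z|$, not a density bounded below on fixed-size blocks (when the $\liminf$ in \eqref{7a} falls short of $2\pi$), so the bounded-distortion estimate must be applied along a chain of Koebe disks whose total extent grows proportionally to the ambient scale; fortunately such a chain has only boundedly many links (disks at geometrically increasing levels), so the distortion constants stay uniform and $m$ can be kept fixed. A secondary, purely technical nuisance is that a tract $\mathcal{T}_j$ may protrude from $H$, so that $F|_{\mathcal{T}_j}$ carries $\mathcal{T}_j\cap H$ onto a proper subdomain of $H$; this is standard to handle (the protruding parts lie over $\{|z|\le R\}$), for instance by working with $\mathcal{T}$ rather than $\mathcal{T}_H$ in the inductive step and intersecting with $H$ only at the end, or by passing to a suitable iterate of $F$.
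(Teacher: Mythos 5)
You should first note that the paper contains no proof of Lemma~\ref{la5} at all: it is quoted verbatim from Eremenko and Lyubich \cite[Theorem~7]{Eremenko1992}, so the only comparison available is with their original argument. Your reconstruction follows the same route they take: logarithmic change of variable, the expansion estimate of Proposition~\ref{la2} for $F$ (equivalently the bound $|\psi_j'(\eta)|\leq 4\pi/\dist(\eta,\partial H)$ for the inverse branches, which are univalent on all of $H$), the $2\pi i$-periodic set $\mathcal{G}$ carrying the hypothesis \eqref{7a}, and Koebe distortion. You have also correctly identified the real difficulty, namely that \eqref{7a} only gives an average density of $\mathcal{G}$ over $\{\re w\leq t\}$, not a density on blocks of fixed size.

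The gap is in how you propose to exploit this. The decisive claim -- that every component of every $\mathcal{A}_N$ loses a uniform $\delta$-fraction of its area in passing to $\mathcal{A}_{N+m}$, whence $\area(\mathcal{A}_N\cap Q)\leq(1-\delta)^{\lfloor N/m\rfloor}\area(\mathcal{A}_0\cap Q)$ -- does not follow from the chain-of-disks remark. A component of $\mathcal{A}_N$ is carried by $F^{N+1}$ onto (essentially) the whole half-plane $H$, and the distortion of the corresponding inverse branch over all of $H$ is unbounded; bounded distortion is available only on hyperbolically bounded subsets, so a definite relative $\mathcal{G}$-mass upstairs does not transfer to a definite relative loss over an entire component. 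What the hypothesis really yields, after the chaining, is a local statement: since $\area(Q\cap\{\re w\leq \varepsilon t\})=2\pi\varepsilon t$, a fixed proportion of the $\mathcal{G}$-mass guaranteed by \eqref{7a} up to level $t$ must lie in the window $\{\varepsilon t\leq\re w\leq t\}$ with $\varepsilon=\varepsilon(\delta)$ fixed (this reduction should be stated explicitly -- it is why your chain has boundedly many links), and covering that window by boundedly many Koebe disks shows that $\mathcal{G}$ has density bounded below in a hyperbolically bounded neighbourhood of any point at height $t$. To conclude, this local estimate has to be combined with a Lebesgue density point (or Vitali covering) argument: restrict to the set of points whose $F$-orbit stays in the tracts at height $\geq\log R+8\pi$, so that $|(F^n)'|\geq 2^n$ and the pulled-back neighbourhoods of a density point shrink to it with bounded geometry while containing a definite proportion of points that leave at time $n$ -- a contradiction. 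That is exactly how Eremenko and Lyubich argue; with your ingredients reorganized in this way the proof closes, but the uniform per-generation exponential loss you assert is both unsupported and stronger than what is needed.
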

\begin{proof}[Proof of Theorem~\ref{t6}]
Assume that~\eqref{4s} holds and that $\area K(p)>0$.
Put $L=f^{-1}(K(p))$ and choose $R>0$ such that $K(p)\subset D(0,R)$.
It follows that $L\subset f^{-1}(D(0,R))$ and $\area L>0$.
Since $K(p)$ is invariant under $p$, we can deduce from~\eqref{4s}
that $L$ is invariant under the map $z\mapsto\lambda z$.
Thus also $A:=\area\!\left( L \cap \{z\colon 1\leq |z|\leq |\lambda\}\right)>0$.
For $r>1$ we choose $n\in\N$ with $|\lambda|^{n-1}\leq r<|\lambda|^n$.
Hence 
\begin{equation}\label{7b}
\begin{aligned}
\logarea \!\left( L \cap D(0,r)\cap \Delta\right)
&\geq
\logarea \!\left( L \cap D(0,|\lambda|^{n-1})\cap\Delta \right)
\\ &=
(n-1) \logarea \!\left( L \cap D(0,|\lambda|)\cap\Delta \right)
\\ &
\geq  (n-1) \frac{A}{|\lambda|^2}
\geq   \frac{n-1}{n} \frac{A}{|\lambda|^2\log|\lambda|}\log r .
\end{aligned}
\end{equation}
Since $L\subset f^{-1}(D(0,R))$ and since $n$ tends to $\infty$ with $r$
we deduce that the lower limit on the 
left hand side of~\eqref{7a} is at least $A/(|\lambda|^2\log|\lambda|)$.
The conclusion now follows from Lemma~\ref{la5}.
\end{proof}
\section{Proof of Theorem \ref{t7}} \label{proof7}
It is well-known that $\rho(\sigma)=2$. This is also an immediate consequence of 
the following lemma, which is a special case of the asymptotics of $\sigma$ and $\zeta$ that were
obtained in~\cite{ZajacKorenkov2015}.
Here we put $w_{mn}=m\omega_1+n\omega_2$ for $m,n\in\Z$.
\begin{lemma}\label{la6}
Let  
\begin{equation}\label{9a}
E=\bigcup_{m,n\in\Z} D\!\left(w_{m,n},e^{-|w_{n,m}|}\right)
\end{equation}
and
\begin{equation}\label{9b}
F=\bigcup_{m,n\in\Z} D\!\left(w_{m,n},\frac{1}{\sqrt{|w_{n,m}|}}\right).
\end{equation}
Then 
\begin{equation}\label{9c}
\log|\sigma(z)|= V(z)
+\O(|z|)
\quad\text{as }|z|\to\infty, \;  z\notin E,
\end{equation}
where 
\begin{equation}\label{9c1}
V(z)=\frac{\pi}{2\im\tau} |z|^2 
+\re\!\left(\!\left( \frac{\eta_1}{2}-\frac{\pi}{2\im\tau}\right)\!z^2\right),
\end{equation}
and
\begin{equation}\label{9d}
\zeta(z)= \eta_1 z- \frac{2\pi i}{\im\tau} \im z
+\O\!\left(\sqrt{|z|}\right)
\quad\text{as }|z|\to\infty, \;  z\notin F.
\end{equation}
\end{lemma}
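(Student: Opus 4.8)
Since the lemma is, as stated, a special case of the asymptotic expansions obtained in~\cite{ZajacKorenkov2015}, one option is simply to quote that paper. The plan I would prefer is to give a short self-contained argument resting only on the quasi-periodicity of $\sigma$ and $\zeta$ and the Legendre relation, reducing the behaviour near a given large $z$ to the behaviour of $\sigma$ and $\zeta$ on one fixed fundamental domain.

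First, as already arranged in the paragraph before the lemma, take $\omega_1=1$, $\omega_2=\tau$ with $\im\tau>0$. For $z$ with $|z|$ large let $w=w_{m,n}$ be a nearest lattice point and write $z=z_0+w$, so that $z_0$ stays in the compact centered fundamental parallelogram $P$ (whose only lattice point is $0$), $|w|$ and $|z|$ are comparable, and $z_0$ is near a lattice point exactly when $z$ is near~$w$. Using $\zeta(z_0+w)=\zeta(z_0)+m\eta_1+n\eta_2$ and $|\sigma(z_0+w)|=|\sigma(z_0)|\,\bigl|\exp((m\eta_1+n\eta_2)(z_0+w/2))\bigr|$ (the unimodular character in the $\sigma$-transformation law drops out under $|\cdot|$), I would set $A=m\eta_1+n\eta_2$ and use the Legendre relation $\eta_1\tau-\eta_2=2\pi i$ together with $\im w=n\im\tau$ to get $A=\eta_1 w-\tfrac{2\pi i}{\im\tau}\im w$. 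A short computation, using $\re(iw\,\im w)=-(\im w)^2$ and $(\im w)^2=\tfrac12(|w|^2-\re w^2)$, then identifies $\tfrac12\re(Aw)$ with the quadratic form $V(w)$ in~\eqref{9c1}.

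With this in hand the two estimates follow by collecting error terms. For $\sigma$: $\log|\sigma(z)|=\log|\sigma(z_0)|+\re(Az_0)+V(w)$, where $\re(Az_0)=\O(|z|)$ since $|A|=\O(|w|)$ and $|z_0|$ is bounded, where $V(w)=V(z)+\O(|z|)$ since $V$ is a quadratic form and $|z_0|$ is bounded, and where $\log|\sigma(z_0)|=\log|z_0|+\O(1)$ because $\sigma$ has a single simple zero on $P$; for $z\notin E$ one has $|z_0|\ge e^{-|w|}$, so $\log|z_0|\ge-|w|=\O(|z|)$, which gives~\eqref{9c}. For $\zeta$: $\zeta(z)=\zeta(z_0)+A=\zeta(z_0)+\eta_1 w-\tfrac{2\pi i}{\im\tau}\im w$, and replacing $w$ by $z-z_0$ turns the last two terms into $\eta_1 z-\tfrac{2\pi i}{\im\tau}\im z+\O(1)$; since $\zeta$ has a simple pole with residue $1$ on $P$, $\zeta(z_0)=1/z_0+\O(1)$, which for $z\notin F$ is $\O(\sqrt{|w|})=\O(\sqrt{|z|})$, giving~\eqref{9d}.

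The routine parts are the algebraic manipulation leading to $V(w)$ and the bookkeeping of error terms. The points that require care — and where I expect the only real friction — are fixing the orientation so that the Legendre relation carries the correct sign (so the coefficient $-2\pi i/\im\tau$ emerges exactly as written), and checking that the radii $e^{-|w|}$ and $1/\sqrt{|w|}$ defining $E$ and $F$ are precisely calibrated to absorb, respectively, the logarithmic singularity of $\log|\sigma(z_0)|$ into $\O(|z|)$ and the simple pole of $\zeta(z_0)$ into $\O(\sqrt{|z|})$.
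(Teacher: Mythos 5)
Your proposal is correct, but it takes a genuinely different route from the paper: the paper gives no proof of Lemma~\ref{la6} at all, simply quoting it as a special case of the asymptotics of $\sigma$ and $\zeta$ obtained in \cite{ZajacKorenkov2015}, whereas you derive it in a self-contained way from the quasi-periodicity laws and the Legendre relation. Your key identities check out: with $\omega_1=1$, $\omega_2=\tau$, $\im\tau>0$, the Legendre relation $\eta_1\tau-\eta_2=2\pi i$ and $\im w=n\im\tau$ give $A:=m\eta_1+n\eta_2=\eta_1w-\frac{2\pi i}{\im\tau}\im w$, and then $\tfrac12\re(Aw)=\frac{\pi}{2\im\tau}|w|^2+\re\!\left(\left(\frac{\eta_1}{2}-\frac{\pi}{2\im\tau}\right)w^2\right)=V(w)$, using $\re(iw\,\im w)=-(\im w)^2$ and $2(\im w)^2=|w|^2-\re(w^2)$; the sign convention you fix is the one consistent with the paper's arrangement $\im\tau>0$, so the coefficient $-2\pi i/\im\tau$ comes out as stated. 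The error bookkeeping is indeed routine: $|A|=\O(|w|)$ and $|z_0|$ bounded handle $\re(Az_0)$ and $V(w)-V(z)$; $\log|\sigma(z_0)|=\log|z_0|+\O(1)$ and $\zeta(z_0)=1/z_0+\O(1)$ hold uniformly on any fixed compact set whose only lattice point is $0$ (nearest-lattice-point cell or centered parallelogram, it does not matter which); off $E$ one has $\log|z_0|\geq-\O(|w|)$ and off $F$ one has $|1/z_0|=\O(\sqrt{|w|})$, and the index swap $w_{n,m}$ versus $w_{m,n}$ in the radii is harmless since $|w_{m,n}|\asymp|w_{n,m}|$. What the citation buys the paper is brevity and access to sharper expansions than needed here; what your argument buys is a short elementary proof using only the transformation laws of $\sigma$ and $\zeta$, which is ample for the $\O(|z|)$ and $\O(\sqrt{|z|})$ error terms actually used in the proof of Theorem~\ref{t7}.
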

\begin{proof}[Proof of Theorem \ref{t7}]
First we note that the condition~\eqref{8a} is equivalent to 
\begin{equation}\label{9e}
\left|\eta_1-\frac{\pi}{\im\tau}\right|\leq  \frac{\pi}{\im\tau}.
\end{equation}
This means that the second term on the right hand side of~\eqref{9c1} is not bigger than the 
first term.

Put $B=\pi/\im\tau$. The last inequality says that there exist $A\in [0,B]$ and
$\alpha\in (-\pi,\pi]$ such that $\eta_1-B=Ae^{i\alpha}$.
With these abbreviations~\eqref{9c1} takes the form
\begin{equation}\label{9e1}
V(z)=\frac{1}{2} ( B|z|^2 +\re(Ae^{i\alpha}z^2))  
\end{equation}
which we may also write as
\begin{equation}\label{9c2}
V(re^{i\theta} )=\frac{1}{2} ( B +A\cos(\alpha+2\theta))  r^2 .
\end{equation}
Put $\theta^\pm=(\pm\pi-\alpha)/2$ and
\begin{equation}\label{9f}
G=
\{re^{i\theta}\colon |\theta-\theta^+|\leq r^{-1/4}\text{ or }|\theta-\theta^-|\leq r^{-1/4}\} .
\end{equation}
Then
\begin{equation}\label{9g}
\begin{aligned}
V(re^{i\theta} )
&\geq \frac{1}{2} ( B +A\cos(\pi+r^{-1/4}))  r^2 
\geq \frac{B}{2}(1- \cos(r^{-1/4}))  r^2 
\\ &
=(1+o(1) \frac{B}{4}  r^{3/2}
\quad\text{as }r\to\infty,\; re^{i\theta}\notin G.
\end{aligned}
\end{equation}
It thus follows from~\eqref{9c} that there exists a positive constant $c_1$ such that
\begin{equation}\label{9h}
\log|\sigma(z)| \geq c_1  |z|^{3/2}
\quad\text{for } z\in \Delta\backslash (E\cup G).
\end{equation}

To estimate $z\sigma'(z)/\sigma(z)=z\zeta'(z)$ we note that
\begin{equation}\label{9i}
\eta_1 z - \frac{2\pi i}{\im\tau} \im z
= (B+Ae^{i\alpha}) z - 2 i B \im z
= B  \overline{z} + Ae^{i\alpha} z
\end{equation}
and hence
\begin{equation}\label{9j}
z\left( \eta_1 z - \frac{2\pi i}{\im\tau} \im z\right)= B|z|^2 +Ae^{i\alpha} z^2.
\end{equation}
Combining this with~\eqref{9e1} we see that
\begin{equation}\label{9k}
\begin{aligned}
\left|z\!\left( \eta_1 z - \frac{2\pi i}{\im\tau} \im z\right)\right|
&\geq \re\!\left(z\!\left( \eta_1 z - \frac{2\pi i}{\im\tau} \im z\right)\!\right)
\\ &
= B|z|^2 +A\re\!\left(e^{i\alpha} z^2\right)
=2V(z).
\end{aligned}
\end{equation}
Together with~\eqref{9d} and~\eqref{9g} this implies that
there exists a constant $c_2$ such that
\begin{equation}\label{9l}
\left|\frac{z\sigma'(z)}{\sigma(z)}\right| =|z\zeta(z)| \geq c_2  |z|^{3/2}
\quad\text{for } z\in \Delta\backslash (F\cup G).
\end{equation}
It is easy to see that $\logarea(\Delta\cap (E\cup F\cup G))<\infty$.
Hence~\eqref{9h} and~\eqref{9l} say that~\eqref{4b2} holds for $f=\sigma$ if $0<\varepsilon<\frac12$.
Since Lemma~\ref{la4a} implies that $f$ has no multiply connected wandering domains, 
the conclusion now follows from Theorem~\ref{t2}.
\end{proof}
\section{Remarks} \label{remarks}
\begin{remark} \label{rem1}
The main tool used by Eremenko and Lyubich~\cite{Eremenko1992}
in their proof of Proposition~\ref{la2} is a logarithmic change of
variable which consists of considering the function $F(\zeta)=\log f(e^\zeta)$
in certain domains.
With $z=e^\zeta$ we have $F'(\zeta)=zf'(z)/f(z)$. 
In our results we also use the expression $zf'(z)/f(z)$, even though we do not assume 
that $f\in\B$ anymore.

We mention that the quantity $zf'(z)/f(z)$ also appears in~\cite{Bergweiler2016}
and in~\cite{Sixsmith2015}. The result in~\cite[Theorem~1.4]{Bergweiler2016}
required lower bounds for $\re(zf'(z)/f(z))$ while our results only assume bounds for $|zf'(z)/f(z)|$.
We note, however, that~\eqref{9k} also yields lower bounds for $\re(z\sigma'(z)/\sigma(z))$.
\end{remark}
\begin{remark} \label{rem2}
Besides the Lebesgue measure of $J(\sin(\alpha z+\beta))$, 
McMullen~\cite[Theorem 1.2]{McMullen1987} also considered the Hausdorff dimension of $J(\lambda e^z)$.
This result and the techniques used in its proof have been the starting point of many
results on the Hausdorff dimension of Julia sets; see~\cite{Stallard2008} for a survey 
and, e.g., \cite{Baranski2009,Bergweiler2010,Bergweiler2009,Rempe2010,Sixsmith2015b} for some more
recent results.

The methods in~\cite{Bergweiler2010,Sixsmith2015b} also use estimates of $zf'(z)/f(z)$, 
but otherwise they are quite different from the ones employed here.
\end{remark}
\subsection*{Acknowledgements}
I thank Weiwei Cui and the referee for helpful comments.

%
%

\end{document}